\newtheorem{theorem}{Theorem}[section]
\newtheorem{corollary}{Corollary}[section]
\theoremstyle{definition}
\newtheorem{definition}{Definition}[section]
\newtheorem{remark}{Remark}[section]
\numberwithin{equation}{section}
\begin{document}


\title{Periodic sequences of \(p\)-class tower groups}

\author{Daniel C. Mayer}
\address{Naglergasse 53\\8010 Graz\\Austria}
\email{algebraic.number.theory@algebra.at}
\urladdr{http://www.algebra.at}

\thanks{Research supported by the Austrian Science Fund (FWF): P 26008-N25}

\subjclass[2000]{Primary 11R37, 11R29, 11R11, 11R16;
secondary 20D15, 20F05, 20F12, 20F14, 20-04}
\keywords{\(p\)-class field towers, \(p\)-principalization, \(p\)-class groups, 
quadratic fields, multiquadratic fields, cubic fields;
finite \(p\)-groups, parametrized pc-presentations, \(p\)-group generation algorithm}

\date{March 31, 2015}

\dedicatory{Dedicated to the memory of Emil Artin}

\begin{abstract}
Recent examples of
periodic bifurcations in descendant trees of finite \(p\)-groups
with \(p\in\lbrace 2,3\rbrace\)
are used to show that the possible \(p\)-class tower groups \(G\)
of certain multiquadratic fields \(K\) with
\(p\)-class group of type \((2,2,2)\), resp. \((3,3)\),
form periodic sequences in the descendant tree of the
elementary abelian root \(C_2^3\), resp. \(C_3^2\).
The particular vertex of the periodic sequence
which occurs as the \(p\)-class tower group \(G\) of an assigned field \(K\)
is determined uniquely by the \(p\)-class number of
a quadratic, resp. cubic, auxiliary field \(k\), associated unambiguously to \(K\).
Consequently, the hard problem of identifying the \(p\)-class tower group \(G\)
is reduced to an easy computation of low degree arithmetical invariants.
\end{abstract}

\maketitle



\section{Introduction}
\label{s:Intro}

In this article, we establish class field theoretic applications
of the purely group theoretic discovery of
periodic bifurcations in descendant trees of finite \(p\)-groups,
as described in our previous papers
\cite[\S\S\ 21--22, pp.182--193]{Ma1}
and
\cite[\S\ 6.2.2]{Ma2},
and summarized in section \S\
\ref{s:PeriodicBifurcations}.

The infinite families of Galois groups of \(p\)-class field towers
with \(p\in\lbrace 2,3\rbrace\)
which are presented in sections \S\S\
\ref{s:TwoStage}
and
\ref{s:ThreeStage}
can be divided into different kinds.
Either they form infinite periodic sequences of uniform step size \(1\),
and thus of fixed coclass.
These are the classical and well-known \textit{coclass sequences}
whose virtual periodicity has been proved independently
by du Sautoy and by Eick and Leedham-Green (see
\cite[\S\ 7, pp.167--168]{Ma1}).
Or they arise from infinite paths of directed edges in descendant trees
whose vertices reveal periodic bifurcations (see
\cite[Thm.21.1, p.182]{Ma1},
\cite[Thm.21.3, p.185]{Ma1},
and
\cite[Thm.6.4]{Ma2}).
Extensive finite parts of the latter have been constructed computationally
with the aid of the \(p\)-group generation algorithm
by Newman
\cite{Nm}
and O'Brien
\cite{Ob}
(see
\cite[\S\S 12--18]{Ma1}),
but their indefinitely repeating periodic pattern
has not been proven rigorously, so far.
They can be of uniform step size \(2\), as in \S\
\ref{s:TwoStage},
or of mixed alternating step sizes \(1\) and \(2\), as in \S\
\ref{s:ThreeStage},
whence their coclass increases unboundedly.



\section{Periodic bifurcations in trees of \(p\)-groups}
\label{s:PeriodicBifurcations}

For the specification of finite \(p\)-groups
throughout this article,
we use the identifiers of the SmallGroups database
\cite{BEO1,BEO2}
and of the ANUPQ-package
\cite{GNO}
implemented in the computational algebra systems GAP
\cite{GAP}
and MAGMA
\cite{BCP,BCFS,MAGMA},
as discussed in
\cite[\S\ 9, pp.168--169]{Ma1}.

The first periodic bifurcations were discovered in August 2012
for the descendant trees of the \(3\)-groups
\(Q=\langle 729,49\rangle\) and \(U=\langle 729,54\rangle\) (see
\cite[\S\ 3, p.163]{Ma1}
and
\cite[Thm.21.3, p.185]{Ma1}),
having abelian quotient invariants \((3,3)\),
when we, in collaboration with Bush, conducted a search for Schur \(\sigma\)-groups
as possible candidates for Galois groups
\(\mathrm{G}_3^{\infty}(K)=\mathrm{Gal}(\mathrm{F}_3^{\infty}(K)\vert K)\)
of three-stage towers of \(3\)-class fields
over complex quadratic base fields \(K=\mathbb{Q}(\sqrt{d})\)
with \(d\le -9748\) and a certain type of \(3\)-principalization
\cite[Cor.4.1.1, p.775]{BuMa}.
The result in
\cite[Thm.4.1, p.774]{BuMa}
will be generalized to more principalization types
and groups of higher nilpotency class in section \S\
\ref{s:ThreeStage}.

\noindent
Similar phenomena were found in May 2013 for the trees with roots
\(\langle 2187,168\rangle\) and \(\langle 2187,181\vert 191\rangle\)
of type \((9,3)\) but have not been published yet,
since we first have to present a classification of
all metabelian \(3\)-groups with abelianization \((9,3)\).

At the beginning of 2014, we investigated the root \(\langle 729,45\rangle\),
which possesses an infinite balanced cover
\cite[Dfn.6.1]{Ma2},
and found periodic bifurcations in its decendant tree
\cite[Thm.6.4]{Ma2}).

In January 2015, we studied complex bicyclic biquadratic fields
\(K=\mathbb{Q}(\sqrt{-1},\sqrt{d})\),
called \textit{special Dirichlet fields} by Hilbert
\cite{Hi},
for whose \(2\)-class tower groups \(\mathrm{G}_2^{\infty}(K)\)
presentations had been given by Azizi, Zekhnini and Taous
\cite[Thm.2(4)]{AZT},
provided the radicand \(d\) exhibits a certain prime factorization
which ensures a \(2\)-class group \(\mathrm{Cl}_2(K)\) of type \((2,2,2)\).

In section \S\
\ref{s:TwoStage},
we use the viewpoint of descendant trees of finite metabelian \(2\)-groups
and our discovery of periodic bifurcations in the tree with root \(\langle 32,34\rangle\)
\cite[Thm.21.1, p.182]{Ma1}
to prove a group theoretic restatement
of the main result in the paper
\cite{AZT},
which connects pairs \((m,n)\) of positive integer parameters
with vertices of the descendant tree \(\mathcal{T}(\langle 8,5\rangle)\)
by means of an injective mapping
\((m,n)\mapsto G_{m,n}\),
as shown impressively in Figure
\ref{fig:IdAndParam222}.



\section{Pattern recognition via Artin transfers}
\label{s:ArtinPattern}

Let \(p\) denote a prime number and
suppose that \(G\) is a finite \(p\)-group
or an infinite pro-\(p\) group with finite abelianization \(G/G^\prime\)
of order \(p^v\) with a positive integer exponent \(v\ge 1\).

In this situation, there exist \(v+1\) layers
\[\mathrm{Lyr}_n(G):=\lbrace G^\prime\le H\le G\mid (G:H)=p^n\rbrace,\text{ for }0\le n\le v,\]
of intermediate normal subgroups \(H\unlhd G\)
between \(G\) and its commutator subgroup \(G^\prime\).
For each of them, we denote by
\(T_{G,H}:G\to H/H^\prime\)
the Artin transfer homomorphism from \(G\) to \(H\)
\cite{Ar}.
In our recent papers
\cite[\S\ 3]{Ma2}
and
\cite{Ma},
the components of
the multiple-layered \textit{transfer target type} (TTT)
\(\tau(G)=\lbrack\tau_0(G);\ldots;\tau_v(G)\rbrack\) of \(G\),
resp. the multiple-layered \textit{transfer kernel type} (TKT)
\(\varkappa(G)=\lbrack\varkappa_0(G);\ldots;\varkappa_v(G)\rbrack\) of \(G\),
were defined by
\[\tau_n(G):=(H/H^\prime)_{H\in\mathrm{Lyr}_n(G)},
\text{ resp. }
\varkappa_n(G):=(\ker(T_{G,H}))_{H\in\mathrm{Lyr}_n(G)},
\text{ for }0\le n\le v.\]

The following information is known
\cite{Ma}
to be crucial for identifying
the metabelianization \(G/G^{\prime\prime}\) of a \(p\)-class tower group \(G\),
but usually does not suffice to determine \(G\) itself.

\begin{definition}
\label{dfn:ArtinPattern}
By the \textit{Artin pattern} of \(G\) we understand the pair

\begin{equation}
\label{eqn:ArtinPattern}
\mathrm{AP}(G):=(\tau(G);\varkappa(G))
\end{equation}

\noindent
consisting of the multiple-layered TTT \(\tau(G)\)
and the multiple-layered TKT \(\varkappa(G)\)
of \(G\).

\noindent
If \(G\) is the \(p\)-tower group of a number field \(K\),
then we put \(\mathrm{AP}(K):=\mathrm{AP}(G)\)
and speak about the \textit{Artin pattern} of \(K\).
\end{definition}

As Emil Artin
\cite{Ar}
pointed out in 1929 already,
using a more classical terminology,
the concepts transfer target type (TTT) and transfer kernel type (TKT)
of a base field \(K\),
which we have now combined to the Artin pattern \((\tau(K);\varkappa(K))\) of \(K\),
require a \textit{non-abelian} setting of unramified extensions of \(K\).
The reason is that the derived subgroup \(H^\prime\) of an intermediate group \(G^\prime<H<G\)
between the \(p\)-tower group \(G\) of \(K\) and its commutator subgroup \(G^\prime\)
is an intermediate group between \(G^\prime\) and the second derived subgroup \(G^{\prime\prime}\).
Therefore, the TTT \(\tau(G)\) of the \(p\)-tower group \(G=G_p^\infty(K)\) coincides with the
TTT \(\tau(G_p^n(K))\) of any higher derived quotient \(G_p^n(K)\simeq G/G^{(n)}\),
for \(n\ge 2\) but not for \(n=1\),
since \(H/H^\prime\simeq(H/G^{(n)})/(H^\prime/G^{(n)})\),
according to the isomorphism theorem.
Similarly, we have the coincidence of TKTs \(\varkappa(G_p^n(K))=\varkappa(G)\), for \(n\ge 2\).



\section{Two-stage towers of \(2\)-class fields}
\label{s:TwoStage}

As our first application of periodic bifurcations in trees of \(2\)-groups,
we present a family of biquadratic number fields \(K\)
with \(2\)-class group \(\mathrm{Cl}_2(K)\) of type \((2,2,2)\),
discovered by Azizi, Zekhnini and Taous
\cite{AZT},
whose \(2\)-class tower groups \(G=\mathrm{G}_2^{\infty}(K)\) are
conjecturally distributed over infinitely many periodic coclass sequences, without gaps.

This claim is stronger than the statements in the following Theorem
\ref{thm:TwoStage}.
The proof firstly consists of a group theoretic construction
of all possible candidates for \(G\), identified by their Artin pattern,
up to nilpotency class \(\mathrm{cl}(G)\le 12\)
and coclass \(\mathrm{cc}(G)\le 13\),
thus having a maximal logarithmic order
\(\log_2(\mathrm{ord}(G))\le 25\).
(The first part is independent of the actual realization
of the possible groups \(G\) as \(2\)-tower groups of suitable fields \(K\).)
Secondly, evidence is provided of the realization of
at least all those groups constructed in the first part
whose logarithmic order does not exceed \(11\).
The second part (see \S\
\ref{s:CompRslt})
is done by computing the Artin pattern
of sufficiently many fields \(K\)
or by using more sophisticated ideas, presented in Theorem
\ref{thm:TwoStage}.

\begin{remark}
\label{rmk:TwoStage}

Generally, the first layer of the transfer kernel type \(\varkappa_1(G)\) of \(G\)
will turn out to be a permutation
\cite[Dfn.21.1, p.182]{Ma1}
of the seven planes
in the \(3\)-dimensional \(\mathbb{F}_2\)-vector space
\(G/G^\prime\simeq\mathrm{Cl}_2(K)\).
We are going to use the notation of
\cite[Thm.21.1 and Cor.21.1]{Ma1}.

\end{remark}

\begin{theorem}
\label{thm:TwoStage}

Let \(K=\mathbb{Q}(\sqrt{-1},\sqrt{d})\)
be a complex bicyclic biquadratic Dirichlet field with radicand \(d=p_1p_2q\),
where \(p_1\equiv 1\pmod{8}\), \(p_2\equiv 5\pmod{8}\) and
\(q\equiv 3\pmod{4}\) are prime numbers such that
\(\left(\frac{p_1}{p_2}\right)=-1\) and \(\left(\frac{p_1}{q}\right)=-1\).

\noindent
Then the \(2\)-class group \(\mathrm{Cl}_2(K)\) of \(K\) is of type \((2,2,2)\),
the \(2\)-class field tower of \(K\) is metabelian (with exactly two stages),
and the isomorphism type of the Galois group
\(G=\mathrm{G}_2^{\infty}(K)=\mathrm{Gal}(\mathrm{F}_2^{\infty}(K)\vert K)\)
of the maximal unramified pro-\(2\) extension \(\mathrm{F}_2^{\infty}(K)\) of \(K\)
is characterized uniquely by the pair of positive integer parameters \((m,n)\)
defined by the \(2\)-class numbers
\(h_2(k_1)=2^{m+1}\) and  \(h_2(k_2)=2^{n}\)
of the complex quadratic fields
\(k_1=\mathbb{Q}(\sqrt{-p_1})\) and
\(k_2=\mathbb{Q}(\sqrt{-p_2q})\).

\noindent
The Legendre symbol \(\left(\frac{p_2}{q}\right)\) decides whether \(G\) is a descendant of
\(\langle 32,34\rangle\) or \(\langle 32,35\rangle\):

\begin{itemize}

\item
\(\left(\frac{p_2}{q}\right)=-1\)
\(\Longleftrightarrow\) \((m\ge)n=1\)
\(\Longleftrightarrow\) the first layer TKT \(\varkappa_1(G)\)
is a permutation with five fixed points and a single \(2\)-cycle
\(\Longleftrightarrow\) \(G\) belongs to the mainline

\begin{equation}
\label{eqn:MainLine35}
M_{0,k}:=\langle 32,35\rangle(-\#1;1)^k,\text{ with }k=m-1\ge 0,
\end{equation}

\noindent
of the coclass tree
\(\mathcal{T}^3(\langle 32,35\rangle)\).

\item
\(\left(\frac{p_2}{q}\right)=+1\)
\(\Longleftrightarrow\) \(n>1\)
\(\Longleftrightarrow\) the first layer TKT \(\varkappa_1(G)\)
is a permutation with a single fixed point and three \(2\)-cycles
\(\Longleftrightarrow\) \(G\) is a descendant of the group \(\langle 32,34\rangle\),
that is \(G\in\mathcal{T}(\langle 32,34\rangle)\).

\end{itemize}

\noindent
More precisely, in the second case
the following equivalences hold in dependence on the parameters \(m,n\le\ell\),
where \(\ell\le 11\) denotes a foregiven upper bound:

\begin{itemize}

\item
\(m\ge n\ge 2\) (with \(n\) fixed)
\(\Longleftrightarrow\) \(G\) belongs to the mainline

\begin{equation}
\label{eqn:MainLine}
M_{j+1,k}:=\langle 32,34\rangle(-\#2;1)^j-\#2;2(-\#1;1)^k,\text{ with fixed }j=n-2
\end{equation}

\noindent
and varying \(k=m-n\ge 0\),
of the coclass tree
\(\mathcal{T}^{n+2}(\langle 32,34\rangle(-\#2;1)^{n-2}-\#2;2)\).

\item
\(n>m\ge 1\) (with \(m\) fixed)
\(\Longleftrightarrow\) \(G\) belongs to the unique periodic coclass sequence

\begin{equation}
\label{eqn:Sequence}
V_{j,k}:=\langle 32,34\rangle(-\#2;1)^j(-\#1;1)^k-\#1;2,\text{ with fixed }j=m-1
\end{equation}

\noindent
and varying \(k=n-m-1\ge 0\),
whose members possess a permutation as their first layer transfer kernel type,
of the coclass tree
\(\mathcal{T}^{m+2}(\langle 32,34\rangle(-\#2;1)^{m-1})\).

\end{itemize}

\end{theorem}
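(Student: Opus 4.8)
The plan is to fuse the arithmetical description of \(G=\mathrm{G}_2^\infty(K)\) due to Azizi, Zekhnini and Taous \cite{AZT} with the group-theoretic architecture of the descendant tree \(\mathcal{T}(\langle 8,5\rangle)\) and the periodic bifurcations of \cite[Thm.21.1, p.182]{Ma1}. I would first dispose of the purely arithmetical assertions. The congruences \(p_1\equiv 1\), \(p_2\equiv 5\pmod{8}\), \(q\equiv 3\pmod{4}\) together with the prescribed conditions \(\left(\frac{p_1}{p_2}\right)=\left(\frac{p_1}{q}\right)=-1\) fix the splitting of the relevant primes in the \(V_4\)-field \(K=\mathbb{Q}(\sqrt{-1},\sqrt{d})\); the genus-theoretic (R\'edei--Reichardt) computation of the \(2\)-rank and the vanishing of the \(4\)-rank then yields \(\mathrm{Cl}_2(K)\) of type \((2,2,2)\), so that \(G/G'\simeq C_2^3\). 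That \(\mathrm{F}_2^\infty(K)\) is metabelian with exactly two stages is read off the explicit pc-presentation of \cite[Thm.2(4)]{AZT}: it carries a single derived step, whence \(G''=1\), while the nonabelian shape of the presentation gives \(G'\neq 1\), so the tower neither stops at stage one nor extends beyond stage two.

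The heart of the argument is to locate the parametrized groups inside the tree. Starting from the elementary abelian root \(C_2^3=\langle 8,5\rangle\), I would pass to the two capable descendants \(\langle 32,34\rangle\) and \(\langle 32,35\rangle\) of logarithmic order \(5\) and, following the step descriptors \((-\#1;1)\), \((-\#2;1)\) and \((-\#2;2)\) of \cite[Thm.21.1]{Ma1}, generate with the \(p\)-group generation algorithm of Newman \cite{Nm} and O'Brien \cite{Ob} the vertices on the mainlines \eqref{eqn:MainLine35}, \eqref{eqn:MainLine} and on the periodic coclass sequence \eqref{eqn:Sequence}. For each such vertex I would compute its Artin pattern \(\mathrm{AP}(G)=(\tau(G);\varkappa(G))\) and match it, layer by layer, against the Artin pattern of the group presented by \cite[Thm.2(4)]{AZT} for the corresponding pair \((m,n)\). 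The periodic bifurcation theorem guarantees that these step descriptors reproduce the same Artin pattern along each infinite branch, so that \((m,n)\mapsto G_{m,n}\) is a well-defined injection onto the stated family of vertices; since the periodicity is so far only verified and not proven, the matching is certified only in the foregiven range \(m,n\le\ell\le 11\).

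The arithmetical meaning of \((m,n)\) is inherited from \cite[Thm.2(4)]{AZT}, where the presentation exponents are already tied to the \(2\)-class numbers \(h_2(k_1)=2^{m+1}\) and \(h_2(k_2)=2^n\) of the auxiliary imaginary quadratic fields \(k_1=\mathbb{Q}(\sqrt{-p_1})\) and \(k_2=\mathbb{Q}(\sqrt{-p_2q})\), so that the isomorphism type \(G\simeq G_{m,n}\) depends on these two invariants alone; the Artin-pattern comparison of the previous step then transports this dependence onto the tree coordinates. The Legendre-symbol dichotomy falls out of the first-layer transfer kernel type \(\varkappa_1(G)\). One shows that \(\left(\frac{p_2}{q}\right)=-1\) is equivalent to \(n=1\), for which the presentation makes \(\varkappa_1(G)\) a permutation of the seven planes of \(G/G'\) with five fixed points and a single \(2\)-cycle, the signature of the mainline of \(\mathcal{T}^3(\langle 32,35\rangle)\) in the notation of \cite[Cor.21.1]{Ma1}; whereas \(\left(\frac{p_2}{q}\right)=+1\) is equivalent to \(n>1\), producing one fixed point and three \(2\)-cycles, the signature of the descendants of \(\langle 32,34\rangle\).

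Within the case \(\left(\frac{p_2}{q}\right)=+1\) the finer split is decided by reading the lengths of the two step-descriptor chains against the coordinates of the vertex. For \(m\ge n\ge 2\) the group sits on the mainline \eqref{eqn:MainLine}, with the initial chain \((-\#2;1)^j\) of fixed length \(j=n-2\) and the terminal chain \((-\#1;1)^k\) of varying length \(k=m-n\); for \(n>m\ge 1\) it sits instead on the periodic coclass sequence \eqref{eqn:Sequence}, with fixed \(j=m-1\) and varying \(k=n-m-1\), the distinguishing feature being that \(\varkappa_1(G)\) remains a genuine permutation of the seven planes along \eqref{eqn:Sequence} rather than degenerating along a mainline. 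I expect the principal obstacle to be exactly the passage from verified to proven periodicity: because the indefinite repetition of the bifurcation pattern of \cite[Thm.21.1]{Ma1} is not yet established rigorously, the identification of the parametrized presentations with the named vertices --- and hence the uniqueness of the characterization by \((m,n)\) --- can only be secured by an explicit finite computation of Artin patterns, which is why the refined equivalences are asserted solely in the bounded range \(m,n\le\ell\le 11\).
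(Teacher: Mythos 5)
Your overall skeleton agrees with the paper's: the arithmetic statements (the type $(2,2,2)$, the two-stage tower, the meaning of the parameters $(m,n)$, and the equivalence $\left(\frac{p_2}{q}\right)=-1\Leftrightarrow n=1$, which the paper simply cites from \cite[\S\ 3, Lem.5]{AZT}) are imported from \cite{AZT}, and the tree-theoretic placement is certified only in the bounded range $m,n\le\ell\le 11$ precisely because the periodicity of the bifurcated pattern of \cite[Thm.21.1]{Ma1} is verified computationally rather than proven. That diagnosis of why the refined equivalences carry the bound $\ell\le 11$ is correct and matches the paper.

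There is, however, a genuine gap at the decisive step, namely the identification of $G_{m,n}$ with a specific vertex. You propose to generate only the vertices on the three sequences \eqref{eqn:MainLine35}, \eqref{eqn:MainLine}, \eqref{eqn:Sequence} named in the conclusion and to match their Artin patterns, layer by layer, against the Artin pattern of the group presented in \cite[Thm.2(4)]{AZT}. This fails for two reasons. First, equality of Artin patterns does not imply isomorphism: as the paper stresses in \S\ \ref{s:ArtinPattern}, the Artin pattern usually does not suffice to determine $G$, and inside a descendant tree distinct vertices (typically siblings) routinely share the same TTT and TKT, so pattern agreement cannot single out one vertex among all descendants of $\langle 8,5\rangle$. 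Second, comparing only against the vertices named in the conclusion is circular: it can show consistency with the claimed answer, but never that $G$ does not sit at some other vertex carrying the same pattern. The paper handles exactly this point differently: Artin patterns are used only for the coarse exclusion of all immediate descendants of $\langle 8,5\rangle$ other than $\langle 32,34\rangle$ and $\langle 32,35\rangle$ --- which requires the stable/polarized component analysis, e.g.\ to eliminate the mainline of $\mathcal{T}(\langle 32,31\rangle)$, whose six stable first-layer TTT components coincide but whose TKT differs, a step you skip --- and the precise identification is then obtained by feeding the explicit pc-presentation \eqref{eqn:Presentation} into Magma and determining the isomorphism type of $G_{m,n}$ directly: \texttt{IdentifyGroup()} for $\lvert G\rvert\le 2^8$, isomorphism testing against the SmallGroups database at order $2^9$, and recursive descendant generation with \texttt{IsIsomorphic()} for $\lvert G\rvert\ge 2^{10}$. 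It is this isomorphism identification, not Artin-pattern matching, that pins $G_{m,n}$ to the asserted vertices; your proposal would need to be repaired either by adopting that computation or by proving that, among all descendants of $\langle 32,34\rangle$ and $\langle 32,35\rangle$ of the relevant orders, the Artin pattern determines the vertex uniquely.
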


We add a corollary which gives the Artin pattern of the groups in Theorem
\ref{thm:TwoStage},
firstly, since it is interesting in its own right, and
secondly, because we are going to use its proof
as a starting point for the proof of Theorem
\ref{thm:TwoStage}.



\begin{corollary}
\label{cor:TwoStage}
Under the assumptions of Theorem
\ref{thm:TwoStage},
the Artin pattern \(\mathrm{AP}(G)=(\tau(G);\varkappa(G))\)
of the \(2\)-tower group \(G=\mathrm{G}_2^\infty(K)\)
of the biquadratic field \(K=\mathbb{Q}(\sqrt{-1},\sqrt{d})\) is given as follows:

The ordered multi-layered transfer target type (TTT)
\(\tau(G)=\lbrack\tau_0;\tau_1;\tau_2;\tau_3\rbrack\)
of the Galois group \(G\) is given by \(\tau_0=(1^3)\), \(\tau_3=(m,n)\), and

\begin{equation}
\label{eqn:1stLayerTTT}
\tau_1=
\begin{cases}
\lbrack (m+1,2),(2,1)^2,(1^3)^2,(2,1)^2\rbrack,\text{ if }\left(\frac{p_2}{q}\right)=-1, \\
\lbrack (m+1,n+1),(1^3)^6\rbrack,\text{ else},
\end{cases}
\end{equation}

\begin{equation}
\label{eqn:2ndLayerTTT}
\tau_2=
\begin{cases}
\lbrack (m+1,1),(m,2),(m+1,1),(2,1)^4\rbrack,\text{ if }\left(\frac{p_2}{q}\right)=-1, \\
\lbrack (m+1,n),(m,n+1),(\max(m+1,n+1),\min(m,n)),(1^3)^4\rbrack,\text{ else}.
\end{cases}
\end{equation}

\noindent
If we now denote by \(N_i:=\mathrm{Norm}_{K_i\vert K}(\mathrm{Cl}_2(K_i))\), \(1\le i\le 7\),
the norm class groups of the seven unramified quadratic extensions \(K_i\vert K\),
then the ordered multi-layered transfer kernel type (TKT)
\(\varkappa(G)=\lbrack\varkappa_0;\varkappa_1;\varkappa_2;\varkappa_3\rbrack\)
of the Galois group \(G\) is given by \(\varkappa_0=1\), \(\varkappa_2=(0^7)\), \(\varkappa_3=(0)\), and

\begin{equation}
\label{eqn:1stLayerTKT}
\varkappa_1=
\begin{cases}
(N_1,N_2,N_3,N_5,N_4,N_6,N_7),\text{ if }\left(\frac{p_2}{q}\right)=-1, \\
(N_1,N_3,N_2,N_5,N_4,N_7,N_6),\text{ else}.
\end{cases}
\end{equation}

\noindent
Thus, \(\varkappa_1\) is always a permutation of the norm class groups \(N_i\).
For \(\left(\frac{p_2}{q}\right)=-1\) it contains five fixed points and a single \(2\)-cycle,
and otherwise it contains a single fixed point and three \(2\)-cycles.

\end{corollary}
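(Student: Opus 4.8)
The plan is to read the Artin pattern off directly from the metabelian power-commutator presentations of \(G=\mathrm{G}_2^\infty(K)\) supplied by Azizi, Zekhnini and Taous \cite[Thm.2(4)]{AZT}, using the fact already recorded above that \(\tau(G)\) and \(\varkappa(G)\) are invariants of the second derived quotient \(G/G^{\prime\prime}\); since the tower is metabelian, \(G^{\prime\prime}=1\) and the presentation of \(G\) itself suffices. First I would fix an \(\mathbb{F}_2\)-basis of the three-dimensional space \(G/G^\prime\simeq\mathrm{Cl}_2(K)\simeq C_2^3\) and enumerate the intermediate normal subgroups by the subspaces of \(\mathbb{F}_2^3\): the seven planes constitute \(\mathrm{Lyr}_1(G)\), the seven lines constitute \(\mathrm{Lyr}_2(G)\), while \(\mathrm{Lyr}_0(G)=\lbrace G\rbrace\) and \(\mathrm{Lyr}_3(G)=\lbrace G^\prime\rbrace\) are degenerate. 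By Artin reciprocity the seven maximal subgroups \(H_i\) correspond to the unramified quadratic extensions \(K_i\vert K\), and the inclusion-induced map identifies the norm class group \(N_i=\mathrm{Norm}_{K_i\vert K}(\mathrm{Cl}_2(K_i))\) with the plane \(H_i/G^\prime\); I would fix this dictionary at the outset so that the kernels computed group-theoretically can afterwards be named arithmetically.

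The degenerate layers are immediate. One has \(\tau_0=(1^3)\) because \(\tau_0(G)=G/G^\prime\), and \(\varkappa_0=1\) since \(T_{G,G}\) induces the identity on \(G/G^\prime\). Dually \(\tau_3=G^\prime/G^{\prime\prime}=G^\prime\), whose abelian type is exactly the pair \((m,n)\) carried by the two parameters of the presentation, while \(\varkappa_3=(0)\) is the statement that the transfer \(T_{G,G^\prime}\) into the bottom layer is total, i.e.\ the principal ideal theorem of Artin \cite{Ar}.

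For the two inner layers I would invoke the standard commutator-calculus formulas for Artin transfers in metabelian \(p\)-groups. Because \(G^{\prime\prime}=1\), each \(H_i^\prime=[H_i,H_i]\) lies in the abelian normal subgroup \(G^\prime\) and is determined by the conjugation action of \(H_i/G^\prime\) on \(G^\prime\); hence the abelian-type invariants \(H_i/H_i^\prime\) making up \(\tau_1\), and likewise those making up \(\tau_2\), are extracted from the relations of the presentation. Carrying this out in the two cases \(\left(\frac{p_2}{q}\right)=\mp 1\) --- which correspond to descent from \(\langle 32,35\rangle\), respectively \(\langle 32,34\rangle\) --- yields the entries displayed in \eqref{eqn:1stLayerTTT} and \eqref{eqn:2ndLayerTTT}, including the asymmetric invariant \((\max(m+1,n+1),\min(m,n))\) which records which of \(m,n\) dominates. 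The same transfer evaluation shows that every transfer into a line is total, giving \(\varkappa_2=(0^7)\), whereas for \(\varkappa_1\) one must verify that each \(\ker(T_{G,H_i})\) is a full plane and therefore equals one of the \(N_j\); the resulting assignment \(i\mapsto j\) is then matched with the permutation patterns of \cite[Thm.21.1 and Cor.21.1]{Ma1}, producing the single transposition with five fixed points for \(\langle 32,35\rangle\) and the three transpositions with one fixed point for \(\langle 32,34\rangle\), as recorded in \eqref{eqn:1stLayerTKT}.

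The main obstacle will be the explicit evaluation of \(\tau_1,\tau_2\) and \(\varkappa_1\): although the transfer formulas are algorithmic, they require well-chosen coset transversals and careful bookkeeping of the commutator relations, and each abelian-type invariant and each kernel carries a dependence on \((m,n)\) that must be tracked uniformly along the entire coclass sequence rather than merely checked at small vertices by machine. A secondary subtlety is the identification step itself: that the group-theoretic kernels \(\ker(T_{G,H_i})\) coincide with the arithmetically defined norm class groups \(N_j\) rests on the compatibility of Artin reciprocity with the transfer homomorphism, so the class-field-theoretic dictionary of the first paragraph must be applied with the correct orientation of the arrows.
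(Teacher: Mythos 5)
Your approach is sound and would arrive at the stated pattern, but it takes a genuinely different route from the paper's. The paper's proof contains no transfer calculus at all: it is a citation-and-translation argument, reading every component of the Artin pattern directly off the arithmetic results of \cite{AZT} --- the ordering of the seven extensions \(K_i\) from their Thm.1,(3),(5), \(\tau_0\) and \(\tau_3\) from Thm.2,(1),(5), \(\tau_1,\tau_2\) from Thm.3,(1),(2) (which give the \(2\)-class groups of the unramified quadratic and bicyclic biquadratic extensions), \(\varkappa_1,\varkappa_2\) from the capitulation results Thm.3,(3)--(5), and \(\varkappa_3\) from the Hilbert/Artin/Furtw\"angler principal ideal theorem. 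You instead take the parametrized presentation \(G_{m,n}\) of \cite[Thm.2(4)]{AZT} as the single input and propose to compute all transfer targets and kernels group-theoretically. That is legitimate --- there is no circularity, since the presentation is an external result, and the paper itself uses it later, in the proof of Theorem \ref{thm:TwoStage}, as Magma input --- and it buys independence from the arithmetic class-group and capitulation computations of \cite{AZT}. What it costs is twofold. First, the entire content of the corollary then resides in the transfer evaluations you defer as ``the main obstacle''; the paper never has to do this work. Second, and this is the point not to underestimate: the corollary asserts \emph{ordered} tuples labelled by the arithmetically defined norm class groups \(N_i\), with the ordering fixed by \cite[Thm.1,(3),(5)]{AZT}. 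A computation from the abstract presentation alone pins down \(\varkappa_1\) and the polarized components of \(\tau_1,\tau_2\) only up to a relabelling of the seven maximal subgroups, hence only the cycle structures (five fixed points and a single \(2\)-cycle, resp.\ one fixed point and three \(2\)-cycles). To obtain the specific assignments displayed, you must anchor the generators \(\rho,\sigma,\tau\) to Artin symbols of specific primes by going back into the proof of \cite[Thm.2]{AZT}; you correctly flag this dictionary, but in your route it is an essential load-bearing ingredient rather than a routine orientation check, whereas in the paper's route the arithmetic labelling comes for free. Finally, note that proving the corollary from the presentation forfeits the paper's methodological point that the candidates for \(G\) can be singled out by Artin patterns ``without knowing a presentation''.
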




\begin{proof}
The underlying order of the \(7\) unramified quadratic, resp. bicyclic biquadratic,
extensions of \(K\) is taken from
\cite[\S\ 2.1, Thm.1,(3),(5)]{AZT}.

For the TTT we use logarithmic abelian type invariants as explained in
\cite[\S\ 2]{Ma2}.
\(\tau_0\) is taken from
\cite[\S\ 2.2, Thm.2,(1)]{AZT},
\(\tau_1,\tau_2\) from
\cite[\S\ 2.3, Thm.3,(1),(2)]{AZT},
and \(\tau_3\) from
\cite[\S\ 2.2, Thm.2,(5)]{AZT}.

Concerning the TKT, \(\varkappa_0\) is trivial,
\(\varkappa_1,\varkappa_2\) are taken from
\cite[\S\ 2.3, Thm.3,(3)--(5)]{AZT},
and \(\varkappa_3\) is total,
due to the Hilbert/Artin/Furtw\"angler principal ideal theorem.
\end{proof}



\begin{proof}
(Proof of Theorem
\ref{thm:TwoStage})

Firstly, the equivalence
\(\left(\frac{p_2}{q}\right)=-1\) \(\Longleftrightarrow\) \(n=1\)
is proved in
\cite[\S\ 3, Lem.5]{AZT}.

\noindent
Next, we use the Artin pattern of \(G\), as given in Corollary
\ref{cor:TwoStage},
to narrow down the possibilities for \(G\).
The possible class-\(2\) quotients of \(G\)
are exactly the immediate descendants of the root \(\langle 8,5\rangle\),
that is,
three vertices \(\langle 16,11\ldots 13\rangle\) of step size \(1\),
nine vertices \(\langle 32,27\ldots 35\rangle\) of step size \(2\),
and ten vertices \(\langle 64,73\ldots 82\rangle\) of step size \(3\).
Among all descendants of \(\langle 8,5\rangle\),
the mainline vertices of the tree \(\mathcal{T}(\langle 32,35\rangle)\)
are characterized uniquely by the fact that
their first layer TKT \(\varkappa_1\)
is a permutation with five fixed points and a single \(2\)-cycle,
and that their first layer TTT \(\tau_1\)
contains the unique polarized (i.e. parameter dependent) component \((m+1,2)\).
Note that
the mainline vertices of the tree \(\mathcal{T}(\langle 32,31\rangle)\)
reveal the same six stable (i.e. parameter independent) components
\(((1^3)^2,(2,1)^4)\) of the accumulated (unordered) first layer TTT \(\tau_1\),
but their first layer TKT \(\varkappa_1\) contains three \(2\)-cycles,
similarly as for descendants of \(\langle 32,34\rangle\).
However, vertices of the complete descendant tree \(\mathcal{T}(\langle 32,34\rangle)\)
are characterized uniquely by six stable components
\(((1^3)^6)\) of their first layer TTT \(\tau_1\).

\noindent
So far, we have been able to single out
that \(G\) must be a descendant of
either \(\langle 32,34\rangle\) or \(\langle 32,35\rangle\),
by means of Artin patterns,
without knowing a presentation.
Now, the parametrized presentation for the group \(G=G_{m,n}\) in
\cite[\S\ 2.2, Thm.2,(4)]{AZT},

\begin{equation}
\label{eqn:Presentation}
G_{m,n} = \langle\rho,\sigma,\tau\mid\rho^4=\sigma^{2^{n+1}}=\tau^{2^{m+1}}=1,\rho^2=\sigma^{2^n},
\lbrack\rho,\sigma\rbrack=\sigma^2,\lbrack\rho,\tau\rbrack=\tau^2,\lbrack\sigma,\tau\rbrack=1\rangle,
\end{equation}

\noindent
is used as input for a Magma program script
\cite{BCFS,MAGMA}
which identifies a \(2\)-group, given by generators and relations,

\begin{center}
\texttt{Group}\(<\rho,\sigma,\tau\mid\text{ relator words in }\rho,\sigma,\tau>\),
\end{center}

\noindent
with the aid of the following functions:

\begin{itemize}

\item
\texttt{CanIdentifyGroup()} and \texttt{IdentifyGroup()} if \(\lvert G\rvert\le 2^8\),

\item
\texttt{IsInSmallGroupDatabase(), pQuotient(), NumberOfSmallGroups(), SmallGroup()} \\
and \texttt{IsIsomorphic()} if \(\lvert G\rvert=2^9\), and

\item
\texttt{GeneratepGroups()}, resp. a recursive call of \texttt{Descendants()} \\
(using \texttt{NuclearRank()} for the recursion),
and \texttt{IsIsomorphic()} if \(\lvert G\rvert\ge 2^{10}\).
\end{itemize}

The output of the Magma script is in perfect accordance with the
pruned descendant tree \(\mathcal{T}_\ast(\langle 8,5\rangle)\),
as described in Theorem 21.1 and Corollary 21.1 of
\cite[pp.182--183]{Ma1}.

Finally, the class and coclass of \(G\) are given in
\cite[\S\ 2.2, Thm.2,(6)]{AZT}.
\end{proof}



{\tiny

\begin{figure}[hb]
\caption{Pairs \((m,n)\) of parameters distributed over \(\mathcal{T}_\ast(\langle 8,5\rangle)\)}
\label{fig:IdAndParam222}

\input{IdAndParam222}

\end{figure}

}



{\tiny

\begin{figure}[hb]
\caption{Minimal radicands \(d\) distributed over \(\mathcal{T}_\ast(\langle 8,5\rangle)\)}
\label{fig:MinRad222}


\setlength{\unitlength}{0.8cm}
\begin{picture}(18,22)(-4,-21)

\put(-5,0.5){\makebox(0,0)[cb]{Order}}
\put(-5,0){\line(0,-1){18}}
\multiput(-5.1,0)(0,-2){10}{\line(1,0){0.2}}
\put(-5.2,0){\makebox(0,0)[rc]{\(8\)}}
\put(-4.8,0){\makebox(0,0)[lc]{\(2^3\)}}
\put(-5.2,-2){\makebox(0,0)[rc]{\(16\)}}
\put(-4.8,-2){\makebox(0,0)[lc]{\(2^4\)}}
\put(-5.2,-4){\makebox(0,0)[rc]{\(32\)}}
\put(-4.8,-4){\makebox(0,0)[lc]{\(2^5\)}}
\put(-5.2,-6){\makebox(0,0)[rc]{\(64\)}}
\put(-4.8,-6){\makebox(0,0)[lc]{\(2^6\)}}
\put(-5.2,-8){\makebox(0,0)[rc]{\(128\)}}
\put(-4.8,-8){\makebox(0,0)[lc]{\(2^7\)}}
\put(-5.2,-10){\makebox(0,0)[rc]{\(256\)}}
\put(-4.8,-10){\makebox(0,0)[lc]{\(2^8\)}}
\put(-5.2,-12){\makebox(0,0)[rc]{\(512\)}}
\put(-4.8,-12){\makebox(0,0)[lc]{\(2^9\)}}
\put(-5.2,-14){\makebox(0,0)[rc]{\(1\,024\)}}
\put(-4.8,-14){\makebox(0,0)[lc]{\(2^{10}\)}}
\put(-5.2,-16){\makebox(0,0)[rc]{\(2\,048\)}}
\put(-4.8,-16){\makebox(0,0)[lc]{\(2^{11}\)}}
\put(-5.2,-18){\makebox(0,0)[rc]{\(4\,096\)}}
\put(-4.8,-18){\makebox(0,0)[lc]{\(2^{12}\)}}
\put(-5,-18){\vector(0,-1){2}}

\put(-3,0){\makebox(0,0)[cc]{\(\mathcal{G}(2,2)\)}}

\put(-4.05,-0.05){\framebox(0.1,0.1){}}
\put(-3.9,0.2){\makebox(0,0)[lb]{\(\langle 5\rangle\)}}
\put(-4,0){\line(1,-1){4}}
\put(-4,0){\line(1,-2){2}}

\put(0.1,-3.8){\makebox(0,0)[lb]{\(\langle 34\rangle\) (not coclass-settled)}}
\put(0.1,-5.8){\makebox(0,0)[lb]{\(\langle 174\rangle\)}}
\put(0.1,-7.8){\makebox(0,0)[lb]{\(\langle 978\rangle\)}}
\put(0.1,-9.8){\makebox(0,0)[lb]{\(\langle 6713\rangle\)}}
\put(0.1,-11.8){\makebox(0,0)[lb]{\(\langle 60885\rangle\)}}
\put(0.1,-13.8){\makebox(0,0)[lb]{\(1;1\)}}
\put(0.1,-15.8){\makebox(0,0)[lb]{\(1;1\)}}
\put(0.1,-17.8){\makebox(0,0)[lb]{\(1;1\)}}
\multiput(0,-4)(0,-2){8}{\circle*{0.1}}
\multiput(0,-4)(0,-2){7}{\line(0,-1){2}}
\put(0,-18){\vector(0,-1){2}}
\put(0,-20.2){\makebox(0,0)[ct]{\(\mathcal{T}_\ast^3(\langle 32,34\rangle)\)}}

\put(-1,-4){\makebox(0,0)[cc]{\(\mathcal{G}(2,3)\)}}

\put(-2.1,-4.2){\makebox(0,0)[rt]{\(\langle 35\rangle\)}}
\put(-2.1,-6.2){\makebox(0,0)[rt]{\(\langle 181\rangle\)}}
\put(-2.1,-8.2){\makebox(0,0)[rt]{\(\langle 984\rangle\)}}
\put(-2.1,-10.2){\makebox(0,0)[rt]{\(\langle 6719\rangle\)}}
\put(-2.1,-12.2){\makebox(0,0)[rt]{\(\langle 60891\rangle\)}}
\put(-2.1,-14.2){\makebox(0,0)[rt]{\(1;1\)}}
\put(-2.1,-16.2){\makebox(0,0)[rt]{\(1;1\)}}
\put(-2.1,-18.2){\makebox(0,0)[rt]{\(1;1\)}}
\multiput(-2,-4)(0,-2){8}{\circle*{0.1}}
\multiput(-2,-4)(0,-2){7}{\line(0,-1){2}}
\put(-2,-18){\vector(0,-1){2}}
\put(-2,-20.5){\makebox(0,0)[ct]{\(\mathcal{T}_\ast^3(\langle 32,35\rangle)\)}}

\put(-1,-6.2){\makebox(0,0)[ct]{\(\langle 175\rangle\)}}
\put(-1,-8.2){\makebox(0,0)[ct]{\(\langle 979\rangle\)}}
\put(-1,-10.2){\makebox(0,0)[ct]{\(\langle 6714\rangle\)}}
\put(-1,-12.2){\makebox(0,0)[ct]{\(\langle 60886\rangle\)}}
\put(-1,-14.2){\makebox(0,0)[ct]{\(1;2\)}}
\put(-1,-16.2){\makebox(0,0)[ct]{\(1;2\)}}
\put(-1,-18.2){\makebox(0,0)[ct]{\(1;2\)}}
\multiput(0,-4)(0,-2){7}{\line(-1,-2){1}}
\multiput(-1,-6)(0,-2){7}{\circle*{0.1}}

\put(0,-4){\line(1,-1){4}}
\put(0,-4){\line(1,-2){2}}
\put(1.1,-4.8){\makebox(0,0)[lb]{\(1^{\text{st}}\) bifurcation}}

\put(4.1,-7.8){\makebox(0,0)[lb]{\(\langle 444\rangle\) (not coclass-settled)}}
\put(4.1,-9.8){\makebox(0,0)[lb]{\(\langle 5503\rangle\)}}
\put(4.1,-11.8){\makebox(0,0)[lb]{\(\langle 58920\rangle\)}}
\put(4.1,-13.8){\makebox(0,0)[lb]{\(1;1\)}}
\put(4.1,-15.8){\makebox(0,0)[lb]{\(1;1\)}}
\put(4.1,-17.8){\makebox(0,0)[lb]{\(1;1\)}}
\multiput(4,-8)(0,-2){6}{\circle*{0.1}}
\multiput(4,-8)(0,-2){5}{\line(0,-1){2}}
\put(4,-18){\vector(0,-1){2}}
\put(4,-20.2){\makebox(0,0)[ct]{\(\mathcal{T}_\ast^4(\langle 128,444\rangle)\)}}

\put(3,-8){\makebox(0,0)[cc]{\(\mathcal{G}(2,4)\)}}

\put(1.9,-8.2){\makebox(0,0)[rt]{\(\langle 445\rangle\)}}
\put(1.9,-10.2){\makebox(0,0)[rt]{\(\langle 5509\rangle\)}}
\put(1.9,-12.2){\makebox(0,0)[rt]{\(\langle 58926\rangle\)}}
\put(1.9,-14.2){\makebox(0,0)[rt]{\(1;1\)}}
\put(1.9,-16.2){\makebox(0,0)[rt]{\(1;1\)}}
\put(1.9,-18.2){\makebox(0,0)[rt]{\(1;1\)}}
\multiput(2,-8)(0,-2){6}{\circle*{0.1}}
\multiput(2,-8)(0,-2){5}{\line(0,-1){2}}
\put(2,-18){\vector(0,-1){2}}
\put(2,-20.5){\makebox(0,0)[ct]{\(\mathcal{T}_\ast^4(\langle 128,445\rangle)\)}}

\put(3,-10.2){\makebox(0,0)[ct]{\(\langle 5504\rangle\)}}
\put(3,-12.2){\makebox(0,0)[ct]{\(\langle 58921\rangle\)}}
\put(3,-14.2){\makebox(0,0)[ct]{\(1;2\)}}
\put(3,-16.2){\makebox(0,0)[ct]{\(1;2\)}}
\put(3,-18.2){\makebox(0,0)[ct]{\(1;2\)}}
\multiput(4,-8)(0,-2){5}{\line(-1,-2){1}}
\multiput(3,-10)(0,-2){5}{\circle*{0.1}}

\put(4,-8){\line(1,-1){4}}
\put(4,-8){\line(1,-2){2}}
\put(5.1,-8.8){\makebox(0,0)[lb]{\(2^{\text{nd}}\) bifurcation}}

\put(8.1,-11.8){\makebox(0,0)[lb]{\(\langle 30599\rangle\) (not coclass-settled)}}
\put(8.1,-13.8){\makebox(0,0)[lb]{\(1;1\)}}
\put(8.1,-15.8){\makebox(0,0)[lb]{\(1;1\)}}
\put(8.1,-17.8){\makebox(0,0)[lb]{\(1;1\)}}
\multiput(8,-12)(0,-2){4}{\circle*{0.1}}
\multiput(8,-12)(0,-2){3}{\line(0,-1){2}}
\put(8,-18){\vector(0,-1){2}}
\put(8,-20.2){\makebox(0,0)[ct]{\(\mathcal{T}_\ast^5(\langle 512,30599\rangle)\)}}

\put(7,-12){\makebox(0,0)[cc]{\(\mathcal{G}(2,5)\)}}

\put(5.9,-12.2){\makebox(0,0)[rt]{\(\langle 30600\rangle\)}}
\put(5.9,-14.2){\makebox(0,0)[rt]{\(1;1\)}}
\put(5.9,-16.2){\makebox(0,0)[rt]{\(1;1\)}}
\put(5.9,-18.2){\makebox(0,0)[rt]{\(1;1\)}}
\multiput(6,-12)(0,-2){4}{\circle*{0.1}}
\multiput(6,-12)(0,-2){3}{\line(0,-1){2}}
\put(6,-18){\vector(0,-1){2}}
\put(6,-20.5){\makebox(0,0)[ct]{\(\mathcal{T}_\ast^5(\langle 512,30600\rangle)\)}}

\put(7,-14.2){\makebox(0,0)[ct]{\(1;2\)}}
\put(7,-16.2){\makebox(0,0)[ct]{\(1;2\)}}
\put(7,-18.2){\makebox(0,0)[ct]{\(1;2\)}}
\multiput(8,-12)(0,-2){3}{\line(-1,-2){1}}
\multiput(7,-14)(0,-2){3}{\circle*{0.1}}

\put(8,-12){\line(1,-1){4}}
\put(8,-12){\line(1,-2){2}}
\put(9.1,-12.8){\makebox(0,0)[lb]{\(3^{\text{rd}}\) bifurcation}}

\put(12.1,-15.8){\makebox(0,0)[lb]{\(2;1\) (not coclass-settled)}}
\put(12.1,-17.8){\makebox(0,0)[lb]{\(1;1\)}}
\multiput(12,-16)(0,-2){2}{\circle*{0.1}}
\multiput(12,-16)(0,-2){1}{\line(0,-1){2}}
\put(12,-18){\vector(0,-1){2}}
\put(12,-20.2){\makebox(0,0)[ct]{\(\mathcal{T}_\ast^6(\langle 512,30599\rangle-\#2;1)\)}}

\put(11,-16){\makebox(0,0)[cc]{\(\mathcal{G}(2,6)\)}}

\put(9.9,-16.2){\makebox(0,0)[rt]{\(2;2\)}}
\put(9.9,-18.2){\makebox(0,0)[rt]{\(1;1\)}}
\multiput(10,-16)(0,-2){2}{\circle*{0.1}}
\multiput(10,-16)(0,-2){1}{\line(0,-1){2}}
\put(10,-18){\vector(0,-1){2}}
\put(10,-20.5){\makebox(0,0)[ct]{\(\mathcal{T}_\ast^6(\langle 512,30599\rangle-\#2;2)\)}}

\put(11,-18.2){\makebox(0,0)[ct]{\(1;2\)}}
\multiput(12,-16)(0,-2){1}{\line(-1,-2){1}}
\multiput(11,-18)(0,-2){1}{\circle*{0.1}}

\put(12,-16){\line(1,-1){4}}
\put(12,-16){\line(1,-2){2}}
\put(13.1,-16.8){\makebox(0,0)[lb]{\(4^{\text{th}}\) bifurcation}}

\multiput(-2.5,-4)(0,-2){7}{\oval(1.5,1)}
\put(-3,-4.8){\makebox(0,0)[cc]{\underbar{\textbf{255}}}}
\put(-3,-6.8){\makebox(0,0)[cc]{\underbar{\textbf{1695}}}}
\put(-3,-8.8){\makebox(0,0)[cc]{\underbar{\textbf{3855}}}}
\put(-3,-10.8){\makebox(0,0)[cc]{\underbar{\textbf{12855}}}}
\put(-3,-12.8){\makebox(0,0)[cc]{\underbar{\textbf{124095}}}}
\put(-3,-14.8){\makebox(0,0)[cc]{\underbar{\textbf{331095}}}}
\put(-3,-16.8){\makebox(0,0)[cc]{\underbar{\textbf{1006095}}}}
\multiput(-1,-6)(0,-2){7}{\oval(1,1.5)}
\put(-1,-7){\makebox(0,0)[cc]{\underbar{\textbf{935}}}}
\put(-1,-9){\makebox(0,0)[cc]{\underbar{\textbf{1887}}}}
\put(-1,-11){\makebox(0,0)[cc]{\underbar{\textbf{6919}}}}
\put(-1,-13){\makebox(0,0)[cc]{\underbar{\textbf{88791}}}}
\put(-1,-15){\makebox(0,0)[cc]{\underbar{\textbf{86343}}}}
\put(-1,-17){\makebox(0,0)[cc]{\underbar{\textbf{256615}}}}
\put(-1,-19){\makebox(0,0)[cc]{\underbar{\textbf{746623}}}}

\multiput(1.5,-8)(0,-2){6}{\oval(1.5,1)}
\put(1,-8.8){\makebox(0,0)[cc]{\underbar{\textbf{1599}}}}
\put(1,-10.8){\makebox(0,0)[cc]{\underbar{\textbf{13767}}}}
\put(1,-12.8){\makebox(0,0)[cc]{\underbar{\textbf{47135}}}}
\put(1,-14.8){\makebox(0,0)[cc]{\underbar{\textbf{246831}}}}
\put(1,-16.8){\makebox(0,0)[cc]{\underbar{\textbf{371319}}}}
\put(1,-18.8){\makebox(0,0)[cc]{\underbar{\textbf{855231}}}}
\multiput(3,-10)(0,-2){5}{\oval(1,1.5)}
\put(3,-11){\makebox(0,0)[cc]{\underbar{\textbf{10735}}}}
\put(3,-13){\makebox(0,0)[cc]{\underbar{\textbf{19311}}}}
\put(3,-15){\makebox(0,0)[cc]{\underbar{\textbf{79663}}}}
\put(3,-17){\makebox(0,0)[cc]{\underbar{\textbf{103279}}}}
\put(3,-19){\makebox(0,0)[cc]{\underbar{\textbf{557887}}}}

\multiput(5.5,-12)(0,-2){3}{\oval(1.5,1)}
\put(5,-12.8){\makebox(0,0)[cc]{\underbar{\textbf{24415}}}}
\put(5,-14.8){\makebox(0,0)[cc]{\underbar{\textbf{63159}}}}
\put(5,-16.8){\makebox(0,0)[cc]{\underbar{\textbf{702519}}}}
\multiput(7,-14)(0,-2){3}{\oval(1,1.5)}
\put(7,-15){\makebox(0,0)[cc]{\underbar{\textbf{166463}}}}
\put(7,-17){\makebox(0,0)[cc]{\underbar{\textbf{395007}}}}
\put(7,-19){\makebox(0,0)[cc]{\underbar{\textbf{1116151}}}}

\multiput(9.5,-16)(0,-2){1}{\oval(1.5,1)}
\put(9,-16.8){\makebox(0,0)[cc]{\underbar{\textbf{231583}}}}
\multiput(11,-18)(0,-2){1}{\oval(1,1.5)}
\put(11,-19){\makebox(0,0)[cc]{\underbar{\textbf{1066407}}}}

\end{picture}

\end{figure}

}



\section{Computational results for two-stage towers}
\label{s:CompRslt}

With the aid of the computational algebra system MAGMA
\cite{MAGMA},
we have determined the pairs of parameters \((m,n)=(m(d),n(d))\),
investigated in
\cite{AZT},
for all \(11\,753\) square free radicands \(d=p_1p_2q\)
of the shape in Theorem
\ref{thm:TwoStage}
which occur in the range \(0<d<2\cdot 10^6\).
As mentioned at the beginning of \S\
\ref{s:TwoStage},
the result supports the conjecture that the corresponding
\(2\)-tower groups \(G_{m(d),n(d)}\) cover the pruned tree
\(\mathcal{T}_\ast(\langle 8,5\rangle)\) without gaps.

Recall that a pair \((m,n)\) contains information on
the \(2\)-class numbers of complex quadratic fields.
So we have a reduction of hard problems for biquadratic fields
to easy questions about quadratic fields.

By means of the following invariants,
the statistical distribution \(d\mapsto (m(d),n(d))\) of parameter pairs
is visualized on the pruned descendant tree \(\mathcal{T}_\ast(\langle 8,5\rangle)\),
using the injective (and probably even bijective) mapping \((m,n)\mapsto G_{m,n}\).
For each fixed individual pair \((m,n)\),
we define its \textit{minimal radicand} \(M(m,n)\) as an absolute invariant:

\begin{equation}
\label{eqn:StatInv}
M(m,n) := \min\lbrace d>0\mid (m(d),n(d))=(m,n)\rbrace.
\end{equation}

The purely group theoretic pruned descendant tree
was constructed in
\cite[\S\ 21.1, pp.182--184]{Ma1},
and was shown in
\cite[\S\ 10.4.1, Fig.7, p.175]{Ma1},
with vertices labelled by the standard identifiers in the SmallGroups Library
\cite{BEO1,BEO2}
or of the ANUPQ-package
\cite{GNO}.

In Figure
\ref{fig:IdAndParam222},
a pair \((m,n)\) of parameters is placed adjacent to the corresponding vertex \(G_{m,n}\)
of the pruned descendant tree \(\mathcal{T}_\ast(\langle 8,5\rangle)\).
There are no overlaps, since the mapping \((m,n)\mapsto G_{m,n}\) is injective.
Each vertex is additionally labelled with a formal identifier, as used in
\cite[Cor.21.1]{Ma1}.

In Figure
\ref{fig:MinRad222},
the minimal radicand \(M(m,n)\)
for which the adjacent vertex is realized as the corresponding group \(G_{m,n}\),
is shown underlined and with boldface font.

Vertices within the support of the distribution
are surrounded by an oval.
The oval is drawn in horizontal orientation for mainline vertices
and in vertical orientation for vertices in other periodic coclass sequences.



\section{Three-stage towers of \(3\)-class fields}
\label{s:ThreeStage}

Our second discovery of periodic bifurcations in trees of \(3\)-groups
will now be applied to a family of quadratic number fields \(K\)
with \(3\)-class group \(\mathrm{Cl}_3(K)\) of type \((3,3)\),
originally investigated by ourselves in
\cite{Ma,Ma0,Ma3},
and extended by Boston, Bush and Hajir in
\cite{BBH}.
The \(3\)-class tower groups \(G=\mathrm{G}_3^{\infty}(K)\) of these fields
are conjecturally distributed over six periodic sequences arising from repeated bifurcations
(of the new kind which was unknown up to now),
whereas it is proven that their metabelianizations populate
six well-known periodic coclass sequences of fixed coclass \(2\).

\begin{theorem}
\label{thm:ThreeStage}

Let \(K=\mathbb{Q}(\sqrt{d})\) be a complex quadratic field
with discriminant \(d<0\),
having a \(3\)-class group \(\mathrm{Cl}_3(K)\) of type \((3,3)\),
such that its \(3\)-principalization
in the four unramified cyclic cubic extensions \(L_1,\ldots,L_4\)
is given by one of the following two first layer TKTs
\[\varkappa_1(K)=(1,1,2,2)\text{ or }(3,1,2,2),\]
resp.
\[\varkappa_1(K)=(2,2,3,4)\text{ or }(2,3,3,4).\]
Further,
let the integer \(2\le\ell\le 9\) denote a foregiven upper bound.

\noindent
Then the \(3\)-class field tower of \(K\) is non-metabelian with exactly three stages,
and the isomorphism type of the Galois group
\(G=\mathrm{G}_3^{\infty}(K)=\mathrm{Gal}(\mathrm{F}_3^{\infty}(K)\vert K)\)
of the maximal unramified pro-\(3\) extension \(\mathrm{F}_3^{\infty}(K)\) of \(K\)
is characterized uniquely by the positive integer parameter \(2\le u\le\ell\)
defined by the \(3\)-class number
\(h_3(k_0)=3^u\)
of the simply real non-Galois cubic subfield
\(k_0\)
of the distinguished polarized extension \(L\) among \(L_1,\ldots,L_4\)
(i.e., \(L=L_1\), resp. \(L=L_2\)):

\begin{equation}
\label{eqn:BifurcationSequence}
\begin{aligned}
G &\simeq\langle 729,49\rangle(-\#2;1-\#1;1)^j-\#2;4\text{ or }5\vert 6,\text{ resp.}\\
G &\simeq\langle 729,54\rangle(-\#2;1-\#1;1)^j-\#2;2\text{ or }4\vert 6,\text{ with }j=u-2.
\end{aligned}
\end{equation}

\noindent
The metabelianization \(G/G^{\prime\prime}\) of the Schur \(\sigma\)-group \(G\), that is the Galois group
\(\mathrm{G}_3^2(K)=\mathrm{Gal}(\mathrm{F}_3^2(K)\vert K)\)
of the maximal metabelian unramified \(3\)-extension \(\mathrm{F}_3^2(K)\) of \(K\)
is unbalanced and given by

\begin{equation}
\label{eqn:CoclassSequence}
\begin{aligned}
G/G^{\prime\prime} &\simeq\langle 729,49\rangle(-\#1;1-\#1;1)^k-\#1;4\text{ or }5\vert 6,\text{ resp.}\\
G/G^{\prime\prime} &\simeq\langle 729,54\rangle(-\#1;1-\#1;1)^k-\#1;2\text{ or }4\vert 6,\text{ with }k=u-2.
\end{aligned}
\end{equation}

\end{theorem}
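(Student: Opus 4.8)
The plan is to imitate the strategy used for Theorem \ref{thm:TwoStage}, replacing the metabelian two-stage analysis by a three-stage analysis driven by periodic bifurcations. First I would compute the Artin pattern $\mathrm{AP}(G)=(\tau(G);\varkappa(G))$, exactly as in Corollary \ref{cor:TwoStage}, using the arithmetical data of the four unramified cyclic cubic extensions $L_1,\ldots,L_4$ of $K$. Since the TTT and TKT of $G$ coincide with those of every higher derived quotient $G/G^{(n)}$ for $n\ge 2$ (as recalled in \S\ref{s:ArtinPattern}), the Artin pattern is an invariant of the metabelianization $G/G''$ and is determined by the prescribed first-layer principalization type $\varkappa_1(K)$. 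The four admissible types $(1,1,2,2),(3,1,2,2)$, resp.\ $(2,2,3,4),(2,3,3,4)$, are precisely the transfer kernel types realized on the descendant trees of the roots $\langle 729,49\rangle$, resp.\ $\langle 729,54\rangle$, so the Artin pattern already selects which of the two roots $G$ descends from and singles out the distinguished polarized extension ($L=L_1$, resp.\ $L=L_2$).

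Next I would pin down the metabelianization. Among the metabelian $3$-groups with abelianization $(3,3)$ whose Artin pattern matches, the candidates lie on the six well-known periodic coclass sequences of fixed coclass $2$ emanating from $\langle 729,49\rangle$ and $\langle 729,54\rangle$. The parameter $u$ enters through the unique polarized (parameter-dependent) component of $\tau_1$, which records the abelian type invariants of $\mathrm{Cl}_3(L)$ and thus the $3$-class number $h_3(k_0)=3^u$ of the simply real non-Galois cubic subfield $k_0\subset L$; the position on the coclass sequence then grows linearly, giving $k=u-2$ and formula \eqref{eqn:CoclassSequence}. The decisive structural observation is that each of these metabelian vertices is \emph{unbalanced}: its relation rank strictly exceeds its generator rank $2$. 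Hence no such metabelian group can itself be the tower group $G$ of a complex quadratic field, because the Koch--Venkov criterion forces $G$ to be a Schur $\sigma$-group, i.e.\ a balanced pro-$3$ group (generator rank equal to relation rank $2$) admitting a generator-inverting automorphism. This rules out a metabelian tower and establishes that the tower has at least three stages.

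To reach exactly three stages and the explicit shape \eqref{eqn:BifurcationSequence}, I would invoke the periodic bifurcations in the trees with roots $\langle 729,49\rangle$ and $\langle 729,54\rangle$ (Theorem 6.4 of \cite{Ma2}, and Theorem 21.3 of \cite{Ma1}). Along the infinite path of directed edges carrying the bifurcations, each vertex of the coclass-$2$ mainline admits a descendant of increased coclass obtained by the capitulation step $-\#2;\ldots$, and iterating the period $(-\#2;1-\#1;1)$ produces the candidate groups indexed by $j=u-2$. For each admissible $u$ with $2\le u\le\ell\le 9$, I would feed the corresponding parametrized pc-presentation into a Magma script built with the $p$-group generation algorithm exactly as in the proof of Theorem \ref{thm:TwoStage} (\texttt{Descendants()}, \texttt{NuclearRank()}, \texttt{IsIsomorphic()}), and verify: (i) that the resulting group is a Schur $\sigma$-group whose second derived quotient is the unbalanced coclass-$2$ group found above, (ii) that $G'''=1$, so the tower terminates at stage three, and (iii) that it is the unique such group with the prescribed Artin pattern, whence $u$ determines $G$ uniquely.

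The main obstacle I anticipate is step three, the non-metabelian identification. Because the indefinite repetition of the periodic bifurcation pattern has not been proved in general, only verified on extensive finite parts (as noted in \S\ref{s:Intro}), the uniqueness and the exact termination $G'''=1$ cannot be argued purely abstractly; they rest on the finite computation bounded by $\ell\le 9$, together with the Schur $\sigma$-group constraint, which prunes the descendant tree drastically. Establishing that this constraint leaves exactly one admissible vertex at each level $u$---and that its metabelianization is precisely the unbalanced coclass-$2$ group already identified---is the crux, and is the reason the statement is confined to the foregiven bound $\ell$.
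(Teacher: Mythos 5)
Your proposal is correct and follows essentially the same route as the paper: restrict \(G\) via its Artin pattern (Corollary \ref{cor:ThreeStage}) to the descendant trees of \(\langle 729,49\rangle\), resp. \(\langle 729,54\rangle\), force a non-metabelian tower by the Koch--Venkov Schur \(\sigma\)-group condition, and identify \(G\) and \(G/G^{\prime\prime}\) along the periodic bifurcations by a finite \(p\)-group-generation computation bounded by \(\ell\), with \(u\) entering through the polarized component \((u+1,u)\) of \(\tau_1\). The only real difference is organizational: where you would redo the descendant search and Schur-\(\sigma\) filtering in Magma (note that, unlike the two-stage case, no arithmetically given parametrized presentation of \(G\) exists here, so the computation must construct candidates rather than identify a presented group), the paper eliminates the competing class-\(3\) quotients explicitly (stable TTT components \(((1^2)^3)\) for the coclass-\(1\) candidates, the \(2\)-cycle criterion of \cite[Cor.3.0.2]{BuMa} against \(\langle 243,3\vert 4\vert 9\rangle\), terminality of \(\langle 243,5\vert 7\rangle\)) and then cites Theorem 21.3 and Corollaries 21.2--21.3 of \cite{Ma1}, where exactly that construction and filtering were already carried out.
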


Again, we first state a corollary
whose proof will initialize the proof of Theorem
\ref{thm:ThreeStage}.



\begin{corollary}
\label{cor:ThreeStage}
Under the assumptions of Theorem
\ref{thm:ThreeStage},
the Artin pattern \(\mathrm{AP}(G)=(\tau(G);\varkappa(G))\)
of the \(3\)-tower group \(G=\mathrm{G}_3^\infty(K)\)
of the complex quadratic field \(K=\mathbb{Q}(\sqrt{d})\) is given as follows:

The ordered multi-layered transfer target type (TTT)
\(\tau(G)=\lbrack\tau_0;\tau_1;\tau_2\rbrack\)
of the Galois group \(G\) is given by \(\tau_0=(1^3)\), \(\tau_2=(u,u,1)\), and

\begin{equation}
\label{eqn:TTT}
\tau_1=
\begin{cases}
\lbrack (u+1,u),1^3,(2,1)^2\rbrack,\text{ if }G\in\mathcal{T}(\langle 729,49\rangle), \\
\lbrack (2,1),(u+1,u),(2,1)^2\rbrack,\text{ if }G\in\mathcal{T}(\langle 729,54\rangle).
\end{cases}
\end{equation}

\noindent
If we now denote by \(N_i:=\mathrm{Norm}_{L_i\vert K}(\mathrm{Cl}_3(L_i))\), \(1\le i\le 4\),
the norm class groups of the four unramified cyclic cubic extensions \(L_i\vert K\),
then the ordered multi-layered transfer kernel type (TKT)
\(\varkappa(G)=\lbrack\varkappa_0;\varkappa_1;\varkappa_2\rbrack\)
of the Galois group \(G\) is given by \(\varkappa_0=1\), \(\varkappa_2=(0)\), and

\begin{equation}
\label{eqn:TKT}
\varkappa_1=
\begin{cases}
(N_1,N_1,N_2,N_2)\text{ or }(N_3,N_1,N_2,N_2),\text{ if }G\in\mathcal{T}(\langle 729,49\rangle), \\
(N_2,N_2,N_3,N_4)\text{ or }(N_2,N_3,N_3,N_4),\text{ if }G\in\mathcal{T}(\langle 729,54\rangle).
\end{cases}
\end{equation}

\noindent
Thus, \(\varkappa_1\) is not a permutation of the norm class groups \(N_i\).
For \(G\in\mathcal{T}(\langle 729,49\rangle)\) it contains a single or no fixed point and no \(2\)-cycle,
and for \(G\in\mathcal{T}(\langle 729,54\rangle)\) it contains three or two fixed points and no \(2\)-cycle.

\end{corollary}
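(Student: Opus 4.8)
The plan is to imitate the proof of Corollary \ref{cor:TwoStage}. The decisive structural input, established at the end of Section \ref{s:ArtinPattern}, is that the Artin pattern \((\tau(G);\varkappa(G))\) is an invariant of the metabelianization \(G/G''\); hence it suffices to evaluate \(\tau\) and \(\varkappa\) on the metabelian quotient \(\mathrm{G}_3^2(K)=G/G''\), whose parametrized pc-presentations are the coclass-\(2\) vertices descending from \(\langle 729,49\rangle\) and \(\langle 729,54\rangle\) listed in \eqref{eqn:CoclassSequence}. Since \(\mathrm{Cl}_3(K)\) is of type \((3,3)\), the abelianization \(G/G'\) is bicyclic and the exponent is \(v=2\), so there are exactly three layers and the four unramified cyclic cubic extensions \(L_1,\ldots,L_4\) correspond to the four maximal subgroups of \(G\). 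First I would fix an ordering of the \(L_i\), and thereby of the norm class groups \(N_i=\mathrm{Norm}_{L_i\vert K}(\mathrm{Cl}_3(L_i))\), compatible with the prescribed \(3\)-principalization, following the conventions of \cite{Ma,Ma0,Ma3,BBH}.

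The transfer kernel type is then largely bookkeeping. The value \(\varkappa_0=1\) is immediate, and \(\varkappa_2=(0)\) is the total transfer to \(G'\) forced by the Hilbert/Artin/Furtw\"angler principal ideal theorem, exactly as \(\varkappa_3\) was handled in Corollary \ref{cor:TwoStage}. The layer-one component \(\varkappa_1\) is nothing other than the hypothesised principalization type of Theorem \ref{thm:ThreeStage}, namely \((1,1,2,2)\) or \((3,1,2,2)\), resp. \((2,2,3,4)\) or \((2,3,3,4)\), rewritten through the symbols \(N_i\); this yields \eqref{eqn:TKT} at once, and counting fixed points and transpositions of the resulting maps gives the final sentence of the statement.

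The genuine work lies in the transfer target type \eqref{eqn:TTT}. Here \(\tau_0=G/G'\) is the prescribed type and \(\tau_2=G'/G''\) can be read off the presentation and identified with \((u,u,1)\). The core computation is \(\tau_1\), the quadruple of abelian type invariants \(H_i/H_i'\) of the four maximal subgroups. I would extract these from the parametrized pc-presentations of the descendants of \(\langle 729,49\rangle\), resp. \(\langle 729,54\rangle\), showing that three of the four invariants are the stable components \(1^3\) and \((2,1)^2\), resp. \((2,1)\) and \((2,1)^2\), while the single polarized invariant attached to the distinguished extension \(L\) grows as \((u+1,u)\); the position of this polarized component within the ordered list distinguishes the two families and fixes the two cases of \eqref{eqn:TTT}.

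The main obstacle is precisely this last computation carried out \emph{uniformly in the parameter} \(u\). For the finitely many small vertices one can simply invoke the \(p\)-group generation algorithm, but to cover the whole periodic sequence one must compute the abelianization of the distinguished maximal subgroup as an explicit function of \(u\) directly from the parametrized presentation, and then verify that the outcome repeats with the periodicity of the bifurcation sequences \eqref{eqn:CoclassSequence}. Once the polarized component \((u+1,u)\) is secured, identifying it with \(\mathrm{Cl}_3(L)\) and relating it by the usual class number relation to \(h_3(k_0)=3^u\) supplies the arithmetic parameter that Theorem \ref{thm:ThreeStage} then exploits; the stable components and the entire TKT are, by contrast, routine.
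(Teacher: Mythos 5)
Your handling of the transfer kernel type is fine and matches the paper: \(\varkappa_0\) is trivial, \(\varkappa_1\) is just the hypothesised principalization type rewritten through the \(N_i\), and \(\varkappa_2=(0)\) is the Hilbert/Artin/Furtw\"angler principal ideal theorem. The problem is your derivation of the TTT, and it is not a technical gap but a logical one: you propose to read \(\tau_1\) and \(\tau_2\) off the parametrized pc-presentations of the coclass-\(2\) vertices ``listed in \eqref{eqn:CoclassSequence}'' --- but \eqref{eqn:CoclassSequence} is the \emph{conclusion} of Theorem \ref{thm:ThreeStage}, and the paper's proof of that theorem begins by using the Artin pattern supplied by this very corollary to narrow down the candidates for \(G\) (the corollary is stated precisely so that its proof can ``initialize the proof of Theorem \ref{thm:ThreeStage}''). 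Assuming that \(G/G''\) is one of the groups in \eqref{eqn:CoclassSequence} when proving the corollary makes the combined argument circular. This is also where the analogy with the two-stage case breaks down: there, Azizi--Zekhnini--Taous had \emph{arithmetically} established the parametrized presentation \eqref{eqn:Presentation} for \(G\), so feeding a presentation into a group-theoretic computation was legitimate; in the three-stage case no arithmetically derived presentation of \(G\) or \(G/G''\) is available at the time the corollary must be proved.

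What the paper actually does is arithmetic, not presentation-based. It anticipates only the coclass statement \(r=2\) for \(G/G''\) (justified by prior classification results for the given TKTs), and then invokes Theorem 4.2 of \cite{Ma0} to compute the \(3\)-class numbers of the four non-Galois cubic subfields \(k_i<L_i\): \(h_3(k_0)=3^u=3^{(m-2)/2}\) for the polarized extension and \(h_3(k_i)=3\) for the remaining three, using \(e=r+1=3\) and the absence of a defect of commutativity. The polarized component then comes from the fact that \(\mathrm{Cl}_3(L)\) is nearly homocyclic of type \(A(3,c)\simeq(u+1,u)\) with \(c=2u+1\), and the full pattern is assembled by citing \cite[\S\S\ 4.1--4.2]{Ma2}, \cite[Thm.1.3]{Ma}, and \cite[Thm.4.4, Tbl.4.7]{Ma3}. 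Note also that the arithmetic identification \(h_3(k_0)=3^u\), which you defer to a closing remark, cannot be an afterthought: \(u\) is \emph{defined} by that class number, so the corollary's content is precisely the expression of the TTT in terms of this arithmetic invariant; a presentation-based computation would only ever yield the pattern in terms of a tree position, and linking that position to \(u\) requires exactly the class-number theorems of \cite{Ma0} that your outline omits.
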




\begin{proof}
First, we must establish the connection of the TTT of \(G\) with the
distinguished non-Galois simply real cubic field \(k_0\).
Anticipating the partial result of Theorem
\ref{thm:ThreeStage}
that the metabelianization \(G/G^{\prime\prime}\) of \(G\) must be of coclass \(r=2\),
we can determine the \(3\)-class numbers of all four non-Galois cubic subfields \(k_i<L_i\)
with the aid of Theorem 4.2 in
\cite[p.489]{Ma0}:
with respect to the normalization in this theorem, we have
\(h_3(k_0)=3^u=h_3(k_1)=3^{\frac{m-2}{2}}\) and uniformly \(h_3(k_i)=3\) for \(2\le i\le 4\),
since \(e=r+1=3\), which implies \(\frac{e-1}{2}=1\),
and \(G/G^{\prime\prime}\) has no defect of commutativity.
The parameter \(m\) is the index of nilpotency of \(G/G^{\prime\prime}\),
whence the nilpotency class is given by \(c=m-1\).

Now, the statements are an immediate consequence of \S\S\ 4.1--4.2
in our recent article
\cite{Ma2},
where the claims are reduced to theorems in our earlier papers:
\cite[Thm.1.3, p.405]{Ma},
and, more generally,
\cite[Thm.4.4, p.440 and Tbl.4.7, p.441]{Ma3}.
We must only take into consideration that the \(3\)-class group \(\mathrm{Cl}_3(L)\) of \(L\)
is nearly homocyclic with abelian type invariants \(A(3,c)\simeq (u+1,u)\),
since \(u=\frac{m-2}{2}\), and thus \(2u+1=m-1=c\).
\end{proof}



\begin{proof}
(Proof of Theorem
\ref{thm:ThreeStage})
First, we use the Artin pattern of \(G\), as given in Corollary
\ref{cor:ThreeStage},
to narrow down the possibilities for \(G\).
The possible class-\(3\) quotients of \(G\)
are exactly the immediate descendants of
the common class-\(2\) quotient \(\langle 27,3\rangle\)
of all \(3\)-groups with abelianization of type \((3,3)\)
(apart from \(\langle 27,4\rangle\)), that is,
four vertices \(\langle 81,7\ldots 10\rangle\) of step size \(1\)
\cite[Fig.3]{Ma1},
and seven vertices \(\langle 243,3\ldots 9\rangle\) of step size \(2\)
\cite[Fig.4]{Ma1}.
All descendants of the former are of coclass \(1\) and
reveal the same three stable (i.e. parameter independent) components
\(((1^2)^3)\) of the first layer TTT \(\tau_1\),
according to
\cite[Thm.3.2,(1)]{Ma2},
which does not agree with the required TTT of \(G\).
Among the latter, the criterion
\cite[Cor.3.0.2, p.772]{BuMa}
rejects three of the seven vertices,
\(\langle 243,3\vert 4\vert 9\rangle\),
since the TKT of \(G\) does not contain a \(2\)-cycle,
and \(\langle 243,5\vert 7\rangle\) are discouraged, since they are terminal.
The remaining two vertices \(\langle 243,6\vert 8\rangle\)
are exactly the parents of the decisive groups \(\langle 729,49\vert 54\rangle\),
where periodic bifurcations set in.

Now, Theorem 21.3 and Corollaries 21.2--21.3 in
\cite[pp.185--187]{Ma1}
show that, using the local notation of Corollary 21.2,
\[G\simeq S_k:=\langle 729,49\vert 54\rangle(-\#2;1-\#1;1)^k-\#2;4\vert 5\vert 6\text{ resp. }2\vert 4\vert 6\]
and
\[G/G^{\prime\prime}\simeq V_{0,2k}:=\langle 729,49\vert 54\rangle(-\#1;1)^{2k}-\#1;4\vert 5\vert 6\text{ resp. }2\vert 4\vert 6,\]
both with \(k=u-2\).
\end{proof}



{\tiny

\begin{figure}[hb]
\caption{Minimal absolute discriminants \(\lvert d\rvert<10^8\) distributed over \(\mathcal{T}^2(\langle 243,6\rangle)\)}
\label{fig:MinDiscCocl2TreeQTyp33}


\setlength{\unitlength}{0.8cm}
\begin{picture}(17,22.5)(-7,-21.5)

\put(-8,0.5){\makebox(0,0)[cb]{order \(3^n\)}}
\put(-8,0){\line(0,-1){20}}
\multiput(-8.1,0)(0,-2){11}{\line(1,0){0.2}}
\put(-8.2,0){\makebox(0,0)[rc]{\(243\)}}
\put(-7.8,0){\makebox(0,0)[lc]{\(3^5\)}}
\put(-8.2,-2){\makebox(0,0)[rc]{\(729\)}}
\put(-7.8,-2){\makebox(0,0)[lc]{\(3^6\)}}
\put(-8.2,-4){\makebox(0,0)[rc]{\(2\,187\)}}
\put(-7.8,-4){\makebox(0,0)[lc]{\(3^7\)}}
\put(-8.2,-6){\makebox(0,0)[rc]{\(6\,561\)}}
\put(-7.8,-6){\makebox(0,0)[lc]{\(3^8\)}}
\put(-8.2,-8){\makebox(0,0)[rc]{\(19\,683\)}}
\put(-7.8,-8){\makebox(0,0)[lc]{\(3^9\)}}
\put(-8.2,-10){\makebox(0,0)[rc]{\(59\,049\)}}
\put(-7.8,-10){\makebox(0,0)[lc]{\(3^{10}\)}}
\put(-8.2,-12){\makebox(0,0)[rc]{\(177\,147\)}}
\put(-7.8,-12){\makebox(0,0)[lc]{\(3^{11}\)}}
\put(-8.2,-14){\makebox(0,0)[rc]{\(531\,441\)}}
\put(-7.8,-14){\makebox(0,0)[lc]{\(3^{12}\)}}
\put(-8.2,-16){\makebox(0,0)[rc]{\(1\,594\,323\)}}
\put(-7.8,-16){\makebox(0,0)[lc]{\(3^{13}\)}}
\put(-8.2,-18){\makebox(0,0)[rc]{\(4\,782\,969\)}}
\put(-7.8,-18){\makebox(0,0)[lc]{\(3^{14}\)}}
\put(-8.2,-20){\makebox(0,0)[rc]{\(14\,348\,907\)}}
\put(-7.8,-20){\makebox(0,0)[lc]{\(3^{15}\)}}
\put(-8,-20){\vector(0,-1){2}}

\put(-6,0.5){\makebox(0,0)[cb]{\(\tau_1(1)=\)}}
\put(-6,0){\makebox(0,0)[cc]{\((21)\)}}
\put(-6,-2){\makebox(0,0)[cc]{\((2^2)\)}}
\put(-6,-4){\makebox(0,0)[cc]{\((32)\)}}
\put(-6,-6){\makebox(0,0)[cc]{\((3^2)\)}}
\put(-6,-8){\makebox(0,0)[cc]{\((43)\)}}
\put(-6,-10){\makebox(0,0)[cc]{\((4^2)\)}}
\put(-6,-12){\makebox(0,0)[cc]{\((54)\)}}
\put(-6,-14){\makebox(0,0)[cc]{\((5^2)\)}}
\put(-6,-16){\makebox(0,0)[cc]{\((65)\)}}
\put(-6,-18){\makebox(0,0)[cc]{\((6^2)\)}}
\put(-6,-20){\makebox(0,0)[cc]{\((76)\)}}
\put(-6,-21){\makebox(0,0)[cc]{\textbf{TTT}}}
\put(-6.5,-21.2){\framebox(1,22){}}

\put(7.6,-7){\vector(0,1){3}}
\put(7.8,-7){\makebox(0,0)[lc]{depth \(3\)}}
\put(7.6,-7){\vector(0,-1){3}}

\put(-3.1,-8){\vector(0,1){2}}
\put(-3.3,-8){\makebox(0,0)[rc]{period length \(2\)}}
\put(-3.1,-8){\vector(0,-1){2}}

\put(0.7,-2){\makebox(0,0)[lc]{bifurcation from}}
\put(0.7,-2.3){\makebox(0,0)[lc]{\(\mathcal{G}(3,2)\) to \(\mathcal{G}(3,3)\)}}

\multiput(0,0)(0,-2){10}{\circle*{0.2}}
\multiput(0,0)(0,-2){9}{\line(0,-1){2}}
\multiput(-1,-2)(0,-2){10}{\circle*{0.2}}
\multiput(-2,-2)(0,-2){10}{\circle*{0.2}}
\multiput(1.95,-4.05)(0,-2){9}{\framebox(0.1,0.1){}}
\multiput(3,-2)(0,-2){10}{\circle*{0.2}}
\multiput(0,0)(0,-2){10}{\line(-1,-2){1}}
\multiput(0,0)(0,-2){10}{\line(-1,-1){2}}
\multiput(0,-2)(0,-2){9}{\line(1,-1){2}}
\multiput(0,0)(0,-2){10}{\line(3,-2){3}}
\multiput(-3.05,-4.05)(-1,0){2}{\framebox(0.1,0.1){}}
\multiput(3.95,-4.05)(0,-2){9}{\framebox(0.1,0.1){}}
\multiput(5,-4)(0,-2){9}{\circle*{0.1}}
\multiput(6,-4)(0,-2){9}{\circle*{0.1}}
\multiput(-1,-2)(-1,0){2}{\line(-1,-1){2}}
\multiput(3,-2)(0,-2){9}{\line(1,-2){1}}
\multiput(3,-2)(0,-2){9}{\line(1,-1){2}}
\multiput(3,-2)(0,-2){9}{\line(3,-2){3}}
\multiput(6.95,-6.05)(0,-2){8}{\framebox(0.1,0.1){}}
\multiput(6,-4)(0,-2){8}{\line(1,-2){1}}

\put(2,-0.5){\makebox(0,0)[lc]{branch}}
\put(2,-0.8){\makebox(0,0)[lc]{\(\mathcal{B}(5)\)}}
\put(2,-2.8){\makebox(0,0)[lc]{\(\mathcal{B}(6)\)}}
\put(2,-4.8){\makebox(0,0)[lc]{\(\mathcal{B}(7)\)}}
\put(2,-6.8){\makebox(0,0)[lc]{\(\mathcal{B}(8)\)}}
\put(2,-8.8){\makebox(0,0)[lc]{\(\mathcal{B}(9)\)}}
\put(2,-10.8){\makebox(0,0)[lc]{\(\mathcal{B}(10)\)}}
\put(2,-12.8){\makebox(0,0)[lc]{\(\mathcal{B}(11)\)}}
\put(2,-14.8){\makebox(0,0)[lc]{\(\mathcal{B}(12)\)}}
\put(2,-16.8){\makebox(0,0)[lc]{\(\mathcal{B}(13)\)}}
\put(2,-18.8){\makebox(0,0)[lc]{\(\mathcal{B}(14)\)}}

\put(-0.1,0.3){\makebox(0,0)[rc]{\(\langle 6\rangle\)}}
\put(-2.1,-1.8){\makebox(0,0)[rc]{\(\langle 50\rangle\)}}
\put(-1.1,-1.8){\makebox(0,0)[rc]{\(\langle 51\rangle\)}}
\put(0.1,-1.8){\makebox(0,0)[lc]{\(\langle 49\rangle\)}}
\put(3.1,-1.8){\makebox(0,0)[lc]{\(\langle 48\rangle\)}}
\put(-4.1,-3.5){\makebox(0,0)[cc]{\(\langle 292\rangle\)}}
\put(-3.1,-3.5){\makebox(0,0)[cc]{\(\langle 293\rangle\)}}
\put(-2.1,-3.3){\makebox(0,0)[cc]{\(\langle 289\rangle\)}}
\put(-2.1,-3.5){\makebox(0,0)[cc]{\(\langle 290\rangle\)}}
\put(-1.1,-3.5){\makebox(0,0)[cc]{\(\langle 288\rangle\)}}
\put(0.1,-3.5){\makebox(0,0)[lc]{\(\langle 285\rangle\)}}
\put(2.2,-3.3){\makebox(0,0)[cc]{\(\langle 284\rangle\)}}
\put(2.2,-3.5){\makebox(0,0)[cc]{\(\langle 291\rangle\)}}
\put(3.2,-3.3){\makebox(0,0)[cc]{\(\langle 286\rangle\)}}
\put(3.2,-3.5){\makebox(0,0)[cc]{\(\langle 287\rangle\)}}
\put(4.2,-3.3){\makebox(0,0)[cc]{\(\langle 276\rangle\)}}
\put(4.2,-3.5){\makebox(0,0)[cc]{\(\langle 283\rangle\)}}
\put(5.2,-3.1){\makebox(0,0)[cc]{\(\langle 280\rangle\)}}
\put(5.2,-3.3){\makebox(0,0)[cc]{\(\langle 281\rangle\)}}
\put(5.2,-3.5){\makebox(0,0)[cc]{\(\langle 282\rangle\)}}
\put(6.2,-3.1){\makebox(0,0)[cc]{\(\langle 277\rangle\)}}
\put(6.2,-3.3){\makebox(0,0)[cc]{\(\langle 278\rangle\)}}
\put(6.2,-3.5){\makebox(0,0)[cc]{\(\langle 279\rangle\)}}

\put(2.1,-3.8){\makebox(0,0)[lc]{\(*2\)}}
\multiput(-2.1,-3.8)(0,-4){5}{\makebox(0,0)[rc]{\(2*\)}}
\multiput(3.1,-3.8)(0,-4){5}{\makebox(0,0)[lc]{\(*2\)}}
\put(4.1,-3.8){\makebox(0,0)[lc]{\(*2\)}}
\multiput(5.1,-5.8)(0,-4){4}{\makebox(0,0)[lc]{\(*2\)}}
\multiput(5.5,-5.3)(0,-4){4}{\makebox(0,0)[lc]{\(\#2\)}}
\multiput(6.1,-5.8)(0,-4){4}{\makebox(0,0)[lc]{\(*2\)}}
\multiput(5.1,-3.8)(0,-4){5}{\makebox(0,0)[lc]{\(*3\)}}
\multiput(6.1,-3.8)(0,-4){5}{\makebox(0,0)[lc]{\(*3\)}}
\multiput(7.1,-5.8)(0,-2){4}{\makebox(0,0)[lc]{\(*3\)}}

\put(-3,-21){\makebox(0,0)[cc]{\textbf{TKT}}}
\put(-2,-21){\makebox(0,0)[cc]{E.14}}
\put(-1,-21){\makebox(0,0)[cc]{E.6}}
\put(0,-21){\makebox(0,0)[cc]{c.18}}
\put(2,-21){\makebox(0,0)[cc]{c.18}}
\put(3.1,-21){\makebox(0,0)[cc]{H.4}}
\put(4,-21){\makebox(0,0)[cc]{H.4}}
\put(5,-21){\makebox(0,0)[cc]{H.4}}
\put(6,-21){\makebox(0,0)[cc]{H.4}}
\put(7,-21){\makebox(0,0)[cc]{H.4}}
\put(-3,-21.5){\makebox(0,0)[cc]{\(\varkappa_1=\)}}
\put(-2,-21.5){\makebox(0,0)[cc]{\((3122)\)}}
\put(-1,-21.5){\makebox(0,0)[cc]{\((1122)\)}}
\put(0,-21.5){\makebox(0,0)[cc]{\((0122)\)}}
\put(2,-21.5){\makebox(0,0)[cc]{\((0122)\)}}
\put(3.1,-21.5){\makebox(0,0)[cc]{\((2122)\)}}
\put(4,-21.5){\makebox(0,0)[cc]{\((2122)\)}}
\put(5,-21.5){\makebox(0,0)[cc]{\((2122)\)}}
\put(6,-21.5){\makebox(0,0)[cc]{\((2122)\)}}
\put(7,-21.5){\makebox(0,0)[cc]{\((2122)\)}}
\put(-3.8,-21.7){\framebox(11.6,1){}}

\put(0,-18){\vector(0,-1){2}}
\put(0.2,-19.4){\makebox(0,0)[lc]{infinite}}
\put(0.2,-19.9){\makebox(0,0)[lc]{mainline}}
\put(1.8,-20.4){\makebox(0,0)[rc]{\(\mathcal{T}^2(\langle 243,6\rangle)\)}}

\multiput(-1,-4)(0,-4){4}{\oval(1,1)}
\put(-1,-4.8){\makebox(0,0)[lc]{\underbar{\textbf{15\,544}}}}
\put(-1,-8.8){\makebox(0,0)[lc]{\underbar{\textbf{268\,040}}}}
\put(-1,-12.8){\makebox(0,0)[lc]{\underbar{\textbf{1\,062\,708}}}}
\put(-1,-16.8){\makebox(0,0)[lc]{\underbar{\textbf{27\,629\,107}}}}
\multiput(-2,-4)(0,-4){4}{\oval(1,1)}
\put(-2,-4.8){\makebox(0,0)[rc]{\underbar{\textbf{16\,627}}}}
\put(-2,-8.8){\makebox(0,0)[rc]{\underbar{\textbf{262\,744}}}}
\put(-2,-12.8){\makebox(0,0)[rc]{\underbar{\textbf{4\,776\,071}}}}
\put(-2,-16.8){\makebox(0,0)[rc]{\underbar{\textbf{40\,059\,363}}}}
\multiput(6,-6)(0,-4){4}{\oval(1,1)}
\put(6,-6.8){\makebox(0,0)[rc]{\underbar{\textbf{21\,668}}}}
\put(6,-10.8){\makebox(0,0)[rc]{\underbar{\textbf{446\,788}}}}
\put(6,-14.8){\makebox(0,0)[rc]{\underbar{\textbf{3\,843\,907}}}}
\put(6,-18.8){\makebox(0,0)[rc]{\underbar{\textbf{52\,505\,588}}}}

\end{picture}

\end{figure}
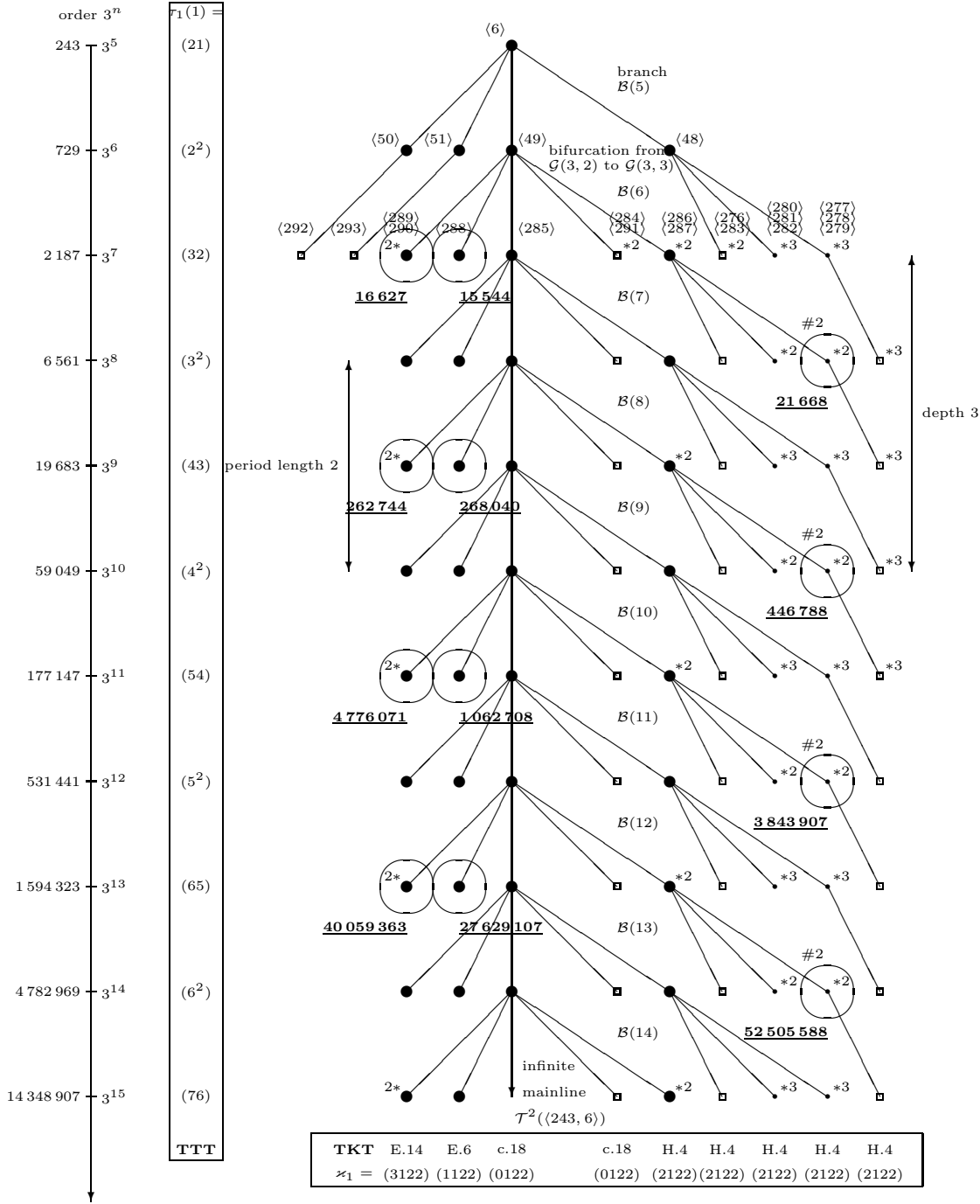

}



\section{Computational results for three-stage towers}
\label{s:CompRsltThreeSage}

With the aid of the computational algebra system MAGMA
\cite{MAGMA},
where the class field theoretic techniques of Fieker
\cite{Fi}
are implemented,
we have determined the Artin pattern \((\tau(K);\varkappa(K))\)
of all complex quadratic fields \(K=\mathbb{Q}(\sqrt{d})\)
with discriminants in the range \(-10^8<d<0\),
whose first layer TTT \(\tau_1(K)\) had been precomputed by Boston, Bush and Hajir
in the database underlying the numerical results in
\cite{BBH}.

Figure
\ref{fig:MinDiscCocl2TreeQTyp33},
resp.
\ref{fig:MinDiscCocl2TreeUTyp33},
shows the minimal absolute discriminant \(\lvert d\rvert\),
underlined and with boldface font,
for which the adjacent vertex of the coclass tree
\(\mathcal{T}^2(\langle 729,49\rangle)\),
resp.
\(\mathcal{T}^2(\langle 729,54\rangle)\),
is realized as the metabelianization \(G/G^{\prime\prime}\)
of the \(3\)-tower group \(G\) of \(K=\mathbb{Q}(\sqrt{d})\).
Vertices within the support of the distribution
are surrounded by an oval.
The corresponding projections \(G\to G/G^{\prime\prime}\)
have been visualized in the Figures 8--9 of
\cite[pp.188--189]{Ma1}.

We have published this information in the Online Encyclopedia of Integer Sequences (OEIS)
\cite{OEIS},
sequences A247692 to A247697.



{\tiny

\begin{figure}[hb]
\caption{Minimal absolute discriminants \(\lvert d\rvert<10^8\) distributed over \(\mathcal{T}^2(\langle 243,8\rangle)\)}
\label{fig:MinDiscCocl2TreeUTyp33}


\setlength{\unitlength}{0.8cm}
\begin{picture}(17,22.5)(-7,-21.5)

\put(-8,0.5){\makebox(0,0)[cb]{order \(3^n\)}}
\put(-8,0){\line(0,-1){20}}
\multiput(-8.1,0)(0,-2){11}{\line(1,0){0.2}}
\put(-8.2,0){\makebox(0,0)[rc]{\(243\)}}
\put(-7.8,0){\makebox(0,0)[lc]{\(3^5\)}}
\put(-8.2,-2){\makebox(0,0)[rc]{\(729\)}}
\put(-7.8,-2){\makebox(0,0)[lc]{\(3^6\)}}
\put(-8.2,-4){\makebox(0,0)[rc]{\(2\,187\)}}
\put(-7.8,-4){\makebox(0,0)[lc]{\(3^7\)}}
\put(-8.2,-6){\makebox(0,0)[rc]{\(6\,561\)}}
\put(-7.8,-6){\makebox(0,0)[lc]{\(3^8\)}}
\put(-8.2,-8){\makebox(0,0)[rc]{\(19\,683\)}}
\put(-7.8,-8){\makebox(0,0)[lc]{\(3^9\)}}
\put(-8.2,-10){\makebox(0,0)[rc]{\(59\,049\)}}
\put(-7.8,-10){\makebox(0,0)[lc]{\(3^{10}\)}}
\put(-8.2,-12){\makebox(0,0)[rc]{\(177\,147\)}}
\put(-7.8,-12){\makebox(0,0)[lc]{\(3^{11}\)}}
\put(-8.2,-14){\makebox(0,0)[rc]{\(531\,441\)}}
\put(-7.8,-14){\makebox(0,0)[lc]{\(3^{12}\)}}
\put(-8.2,-16){\makebox(0,0)[rc]{\(1\,594\,323\)}}
\put(-7.8,-16){\makebox(0,0)[lc]{\(3^{13}\)}}
\put(-8.2,-18){\makebox(0,0)[rc]{\(4\,782\,969\)}}
\put(-7.8,-18){\makebox(0,0)[lc]{\(3^{14}\)}}
\put(-8.2,-20){\makebox(0,0)[rc]{\(14\,348\,907\)}}
\put(-7.8,-20){\makebox(0,0)[lc]{\(3^{15}\)}}
\put(-8,-20){\vector(0,-1){2}}

\put(-6,0.5){\makebox(0,0)[cb]{\(\tau_1(2)=\)}}
\put(-6,0){\makebox(0,0)[cc]{\((21)\)}}
\put(-6,-2){\makebox(0,0)[cc]{\((2^2)\)}}
\put(-6,-4){\makebox(0,0)[cc]{\((32)\)}}
\put(-6,-6){\makebox(0,0)[cc]{\((3^2)\)}}
\put(-6,-8){\makebox(0,0)[cc]{\((43)\)}}
\put(-6,-10){\makebox(0,0)[cc]{\((4^2)\)}}
\put(-6,-12){\makebox(0,0)[cc]{\((54)\)}}
\put(-6,-14){\makebox(0,0)[cc]{\((5^2)\)}}
\put(-6,-16){\makebox(0,0)[cc]{\((65)\)}}
\put(-6,-18){\makebox(0,0)[cc]{\((6^2)\)}}
\put(-6,-20){\makebox(0,0)[cc]{\((76)\)}}
\put(-6,-21){\makebox(0,0)[cc]{\textbf{TTT}}}
\put(-6.5,-21.2){\framebox(1,22){}}

\put(7.6,-7){\vector(0,1){3}}
\put(7.8,-7){\makebox(0,0)[lc]{depth \(3\)}}
\put(7.6,-7){\vector(0,-1){3}}

\put(-3.1,-8){\vector(0,1){2}}
\put(-3.3,-8){\makebox(0,0)[rc]{period length \(2\)}}
\put(-3.1,-8){\vector(0,-1){2}}

\put(0.7,-2){\makebox(0,0)[lc]{bifurcation from}}
\put(0.7,-2.3){\makebox(0,0)[lc]{\(\mathcal{G}(3,2)\) to \(\mathcal{G}(3,3)\)}}

\multiput(0,0)(0,-2){10}{\circle*{0.2}}
\multiput(0,0)(0,-2){9}{\line(0,-1){2}}
\multiput(-1,-2)(0,-2){10}{\circle*{0.2}}
\multiput(-2,-2)(0,-2){10}{\circle*{0.2}}
\multiput(1.95,-4.05)(0,-2){9}{\framebox(0.1,0.1){}}
\multiput(3,-2)(0,-2){10}{\circle*{0.2}}
\multiput(0,0)(0,-2){10}{\line(-1,-2){1}}
\multiput(0,0)(0,-2){10}{\line(-1,-1){2}}
\multiput(0,-2)(0,-2){9}{\line(1,-1){2}}
\multiput(0,0)(0,-2){10}{\line(3,-2){3}}
\multiput(-3.05,-4.05)(-1,0){2}{\framebox(0.1,0.1){}}
\multiput(3.95,-6.05)(0,-2){8}{\framebox(0.1,0.1){}}
\multiput(5,-6)(0,-2){8}{\circle*{0.1}}
\multiput(6,-4)(0,-2){9}{\circle*{0.1}}
\multiput(-1,-2)(-1,0){2}{\line(-1,-1){2}}
\multiput(3,-4)(0,-2){8}{\line(1,-2){1}}
\multiput(3,-4)(0,-2){8}{\line(1,-1){2}}
\multiput(3,-2)(0,-2){9}{\line(3,-2){3}}
\multiput(6.95,-6.05)(0,-2){8}{\framebox(0.1,0.1){}}
\multiput(6,-4)(0,-2){8}{\line(1,-2){1}}

\put(2,-0.5){\makebox(0,0)[lc]{branch}}
\put(2,-0.8){\makebox(0,0)[lc]{\(\mathcal{B}(5)\)}}
\put(2,-2.8){\makebox(0,0)[lc]{\(\mathcal{B}(6)\)}}
\put(2,-4.8){\makebox(0,0)[lc]{\(\mathcal{B}(7)\)}}
\put(2,-6.8){\makebox(0,0)[lc]{\(\mathcal{B}(8)\)}}
\put(2,-8.8){\makebox(0,0)[lc]{\(\mathcal{B}(9)\)}}
\put(2,-10.8){\makebox(0,0)[lc]{\(\mathcal{B}(10)\)}}
\put(2,-12.8){\makebox(0,0)[lc]{\(\mathcal{B}(11)\)}}
\put(2,-14.8){\makebox(0,0)[lc]{\(\mathcal{B}(12)\)}}
\put(2,-16.8){\makebox(0,0)[lc]{\(\mathcal{B}(13)\)}}
\put(2,-18.8){\makebox(0,0)[lc]{\(\mathcal{B}(14)\)}}

\put(-0.1,0.3){\makebox(0,0)[rc]{\(\langle 8\rangle\)}}
\put(-2.1,-1.8){\makebox(0,0)[rc]{\(\langle 53\rangle\)}}
\put(-1.1,-1.8){\makebox(0,0)[rc]{\(\langle 55\rangle\)}}
\put(0.1,-1.8){\makebox(0,0)[lc]{\(\langle 54\rangle\)}}
\put(3.1,-1.8){\makebox(0,0)[lc]{\(\langle 52\rangle\)}}
\put(-4.1,-3.5){\makebox(0,0)[cc]{\(\langle 300\rangle\)}}
\put(-3.1,-3.5){\makebox(0,0)[cc]{\(\langle 309\rangle\)}}
\put(-2.1,-3.3){\makebox(0,0)[cc]{\(\langle 302\rangle\)}}
\put(-2.1,-3.5){\makebox(0,0)[cc]{\(\langle 306\rangle\)}}
\put(-1.1,-3.5){\makebox(0,0)[cc]{\(\langle 304\rangle\)}}
\put(0.1,-3.5){\makebox(0,0)[lc]{\(\langle 303\rangle\)}}
\put(2.2,-3.3){\makebox(0,0)[cc]{\(\langle 307\rangle\)}}
\put(2.2,-3.5){\makebox(0,0)[cc]{\(\langle 308\rangle\)}}
\put(3.2,-3.3){\makebox(0,0)[cc]{\(\langle 301\rangle\)}}
\put(3.2,-3.5){\makebox(0,0)[cc]{\(\langle 305\rangle\)}}
\put(6.2,-2.5){\makebox(0,0)[cc]{\(\langle 294\rangle\)}}
\put(6.2,-2.7){\makebox(0,0)[cc]{\(\langle 295\rangle\)}}
\put(6.2,-2.9){\makebox(0,0)[cc]{\(\langle 296\rangle\)}}
\put(6.2,-3.1){\makebox(0,0)[cc]{\(\langle 297\rangle\)}}
\put(6.2,-3.3){\makebox(0,0)[cc]{\(\langle 298\rangle\)}}
\put(6.2,-3.5){\makebox(0,0)[cc]{\(\langle 299\rangle\)}}

\put(2.1,-3.8){\makebox(0,0)[lc]{\(*2\)}}
\multiput(-2.1,-3.8)(0,-4){5}{\makebox(0,0)[rc]{\(2*\)}}
\multiput(3.1,-3.8)(0,-4){5}{\makebox(0,0)[lc]{\(*2\)}}
\put(6.1,-3.8){\makebox(0,0)[lc]{\(*6\)}}
\multiput(5.1,-5.8)(0,-4){4}{\makebox(0,0)[lc]{\(*2\)}}
\multiput(5.5,-5.3)(0,-4){4}{\makebox(0,0)[lc]{\(\#4\)}}
\multiput(6.1,-5.8)(0,-4){4}{\makebox(0,0)[lc]{\(*2\)}}
\multiput(5.1,-7.8)(0,-4){4}{\makebox(0,0)[lc]{\(*3\)}}
\multiput(6.1,-7.8)(0,-4){4}{\makebox(0,0)[lc]{\(*3\)}}
\multiput(7.1,-7.8)(0,-2){4}{\makebox(0,0)[lc]{\(*2\)}}

\put(-3,-21){\makebox(0,0)[cc]{\textbf{TKT}}}
\put(-2,-21){\makebox(0,0)[cc]{E.9}}
\put(-1,-21){\makebox(0,0)[cc]{E.8}}
\put(0,-21){\makebox(0,0)[cc]{c.21}}
\put(2,-21){\makebox(0,0)[cc]{c.21}}
\put(3.1,-21){\makebox(0,0)[cc]{G.16}}
\put(4,-21){\makebox(0,0)[cc]{G.16}}
\put(5,-21){\makebox(0,0)[cc]{G.16}}
\put(6,-21){\makebox(0,0)[cc]{G.16}}
\put(7,-21){\makebox(0,0)[cc]{G.16}}
\put(-3,-21.5){\makebox(0,0)[cc]{\(\varkappa_1=\)}}
\put(-2,-21.5){\makebox(0,0)[cc]{\((2334)\)}}
\put(-1,-21.5){\makebox(0,0)[cc]{\((2234)\)}}
\put(0,-21.5){\makebox(0,0)[cc]{\((2034)\)}}
\put(2,-21.5){\makebox(0,0)[cc]{\((2034)\)}}
\put(3.1,-21.5){\makebox(0,0)[cc]{\((2134)\)}}
\put(4,-21.5){\makebox(0,0)[cc]{\((2134)\)}}
\put(5,-21.5){\makebox(0,0)[cc]{\((2134)\)}}
\put(6,-21.5){\makebox(0,0)[cc]{\((2134)\)}}
\put(7,-21.5){\makebox(0,0)[cc]{\((2134)\)}}
\put(-3.8,-21.7){\framebox(11.6,1){}}

\put(0,-18){\vector(0,-1){2}}
\put(0.2,-19.4){\makebox(0,0)[lc]{infinite}}
\put(0.2,-19.9){\makebox(0,0)[lc]{mainline}}
\put(1.8,-20.4){\makebox(0,0)[rc]{\(\mathcal{T}^2(\langle 243,8\rangle)\)}}

\multiput(-1,-4)(0,-4){4}{\oval(1,1)}
\put(-1,-4.8){\makebox(0,0)[lc]{\underbar{\textbf{34\,867}}}}
\put(-1,-8.8){\makebox(0,0)[lc]{\underbar{\textbf{370\,740}}}}
\put(-1,-12.8){\makebox(0,0)[lc]{\underbar{\textbf{4\,087\,295}}}}
\put(-1,-16.8){\makebox(0,0)[lc]{\underbar{\textbf{19\,027\,947}}}}
\multiput(-2,-4)(0,-4){5}{\oval(1,1)}
\put(-2,-4.8){\makebox(0,0)[rc]{\underbar{\textbf{9\,748}}}}
\put(-2,-8.8){\makebox(0,0)[rc]{\underbar{\textbf{297\,079}}}}
\put(-2,-12.8){\makebox(0,0)[rc]{\underbar{\textbf{1\,088\,808}}}}
\put(-2,-16.8){\makebox(0,0)[rc]{\underbar{\textbf{11\,091\,140}}}}
\put(-2,-19.3){\makebox(0,0)[rc]{\underbar{\textbf{94\,880\,548}}}}
\multiput(6,-6)(0,-4){4}{\oval(1,1)}
\put(6,-6.8){\makebox(0,0)[rc]{\underbar{\textbf{17\,131}}}}
\put(6,-10.8){\makebox(0,0)[rc]{\underbar{\textbf{819\,743}}}}
\put(6,-14.8){\makebox(0,0)[rc]{\underbar{\textbf{2\,244\,399}}}}
\put(6,-18.8){\makebox(0,0)[rc]{\underbar{\textbf{30\,224\,744}}}}

\end{picture}

\end{figure}
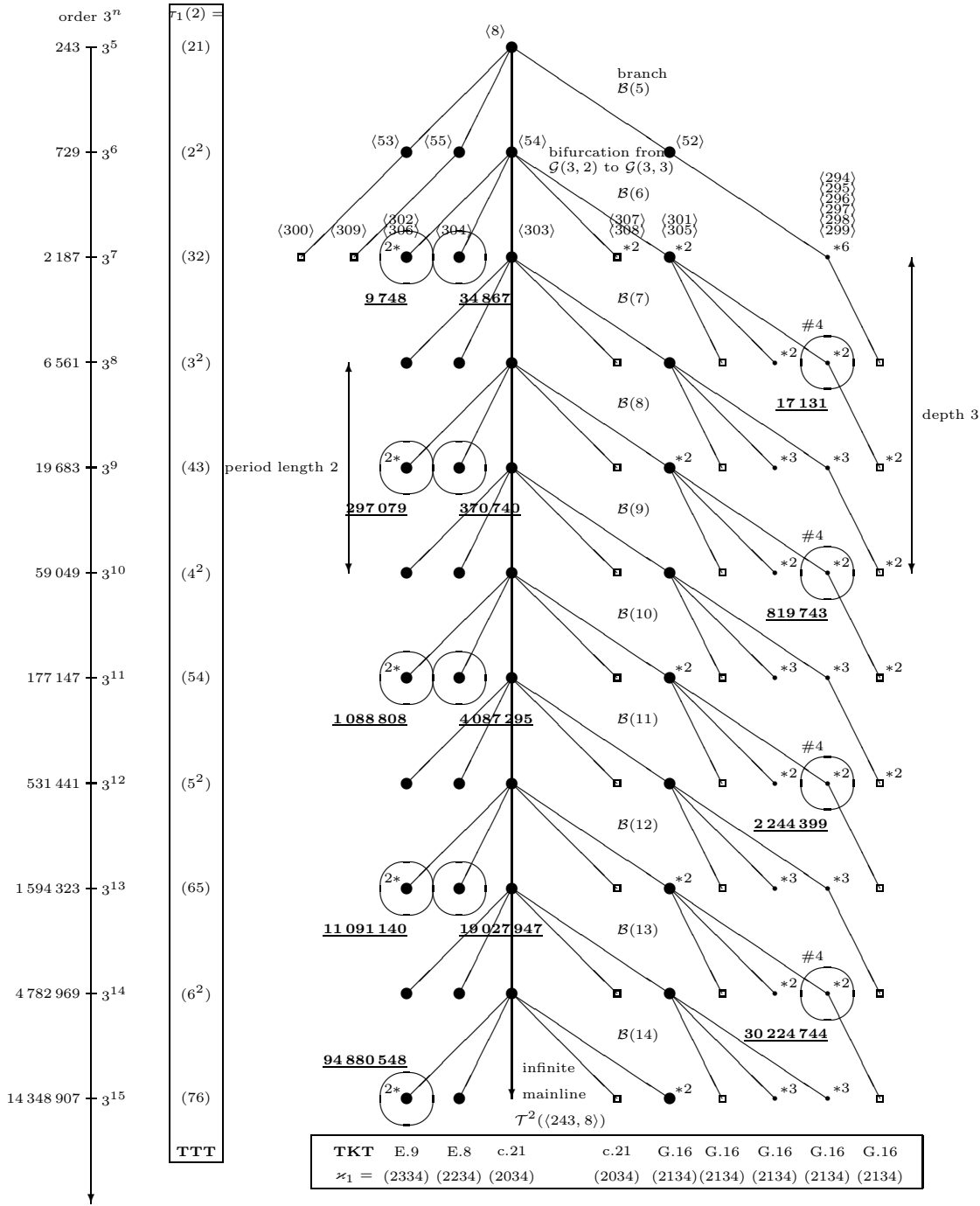

}



We emphasize that the results of section
\ref{s:ThreeStage}
provide the background for considerably stronger assertions
than those made in
\cite{BuMa}.
Firstly, since they concern four TKTs E.6, E.14, E.8, E.9 instead of just TKT E.9
\cite[\S\ 4]{Ma2},
and secondly, since they apply to varying odd nilpotency class \(5\le\mathrm{cl}(G)\le 19\)
instead of just class \(5\).



\section{Acknowledgements}
\label{s:Acknowledgements}

We gratefully acknowledge that our research is supported by the
Austrian Science Fund (FWF): P 26008-N25.
We are indebted to Nigel Boston, Michael R. Bush and Farshid Hajir
for kindly making available an unpublished database containing
numerical results of their paper
\cite{BBH}.




\end{document}